\documentclass[11pt]{amsart}

\usepackage{amsmath,amssymb,amsthm,mathtools}
\usepackage{enumitem}
\usepackage{microtype}
\usepackage{xurl}
\usepackage[colorlinks=true,linkcolor=blue,citecolor=blue,urlcolor=blue]{hyperref}
\usepackage[margin=1.15in]{geometry}

\numberwithin{equation}{section}

\newcommand{\R}{\mathbb R}
\newcommand{\N}{\mathbb N}
\newcommand{\Cone}{\operatorname{Cone}}
\newcommand{\BR}{\operatorname{BR}}

\newcommand{\cL}{\mathcal L}
\newcommand{\cG}{\mathcal G}

\theoremstyle{plain}
\newtheorem{theorem}{Theorem}[section]
\newtheorem{proposition}[theorem]{Proposition}
\newtheorem{lemma}[theorem]{Lemma}
\newtheorem{corollary}[theorem]{Corollary}
\theoremstyle{definition}
\newtheorem{definition}[theorem]{Definition}
\theoremstyle{remark}
\newtheorem{remark}[theorem]{Remark}

\title[Bounded Ratios II: Local-to-Global Classification]
{Bounded Ratios of Lorentzian Polynomials II:
\texorpdfstring{\\}{ }
The Complete Quadratic Local-to-Global Classification}

\author{Dijia Chen}
\address{Department of Mathematics, University of Wisconsin--Madison, Madison, WI, USA}
\email{dchen426@wisc.edu}

\author{Bowen Gan}
\address{Institute of Mathematical Sciences, ShanghaiTech University, Shanghai, China}
\email{ganbw2023@shanghaitech.edu.cn}

\author{Ivy Liu}
\address{Department of Mathematics, University of Wisconsin--Madison, Madison, WI, USA}
\email{ivy.liu@wisc.edu}

\author{Zemeng Wang}
\address{Department of Mathematics, University of Wisconsin--Madison, Madison, WI, USA}
\email{zwang3694@wisc.edu}

\author{Chengzhi Wu}
\address{Department of Mathematics, University of Wisconsin--Madison, Madison, WI, USA}
\email{cwu493@wisc.edu}

\keywords{Lorentzian polynomials, bounded ratios, M-convexity}
\subjclass[2020]{05B35, 05E40, 52B40, 90C27}

\hypersetup{
    pdftitle={Bounded Ratios of Lorentzian Polynomials II: The Complete Quadratic Local-to-Global Classification},
    pdfauthor={Dijia Chen, Bowen Gan, Ivy Liu, Zemeng Wang, and Chengzhi Wu},
    pdfsubject={Quadratic local-to-global classification for bounded ratios of Lorentzian polynomials},
    pdfkeywords={Lorentzian polynomials, bounded ratios, local-to-global principle, M-convexity, cut cone, transportation metrics}
}

\begin{document}
\raggedbottom

\begin{abstract}
Every quadratic Hessian slice of a Lorentzian polynomial yields bounded monomial ratios among the normalized coefficients of the polynomial. We determine exactly for which pairs \((n,d)\) these quadratic-slice ratios generate the full bounded-ratio cone for every \(M\)-convex support \(S\subseteq\Delta_n^d\). For \(d\geq 2\), this quadratic local-to-global principle holds universally if and only if
\[
n\leq 3,\qquad d=2,\qquad\text{or}\qquad (n,d)=(4,3).
\]
In every remaining case, the principle fails already for Lorentzian polynomials with full support: for cubics in \(n\geq 5\) variables and for polynomials of degree \(d\geq 4\) in \(n\geq 4\) variables. We identify the two minimal obstructions, at \((n,d)=(4,4)\) and \((n,d)=(5,3)\), and propagate them throughout the failure region by degree and variable aggregation.
\end{abstract}

\maketitle
\tableofcontents

\section{Introduction}

Lorentzian polynomials, introduced by Br\"and\'en and Huh~\cite{BH},
provide a common framework for Hodge--Riemann-type inequalities in
combinatorics and convex geometry. A homogeneous polynomial of degree
$d\ge2$ with nonnegative coefficients is Lorentzian if and only if its
support is $M$-convex and every quadratic derivative has Hessian with at
most one positive eigenvalue~\cite[Theorem~2.25]{BH}. Thus, once the
support is fixed, Lorentzianity is determined by a collection of
quadratic Hessian slices.

This local characterization raises a natural question for multiplicative
inequalities among coefficients. Let
\[
f(\mathbf x)
=
\sum_{\alpha\in S}
c_\alpha\frac{\mathbf x^\alpha}{\alpha!},
\qquad
c_\alpha>0,
\]
and, for $w=(w_\alpha)_{\alpha\in S}\in\mathbb R^S$, write
\[
R_w(f):=\prod_{\alpha\in S}c_\alpha^{w_\alpha}.
\]
We call $w$ a \emph{bounded ratio} if $R_w$ is uniformly bounded on the
Lorentzian locus $\cL_S^+$. Every quadratic Hessian slice has its own
bounded-ratio cone, and extending these local exponent vectors to the
global coefficient space gives a cone
\[
\Cone\bigl(\cG_n(S)\bigr)
\subseteq
\BR(\cL_S^+).
\]
We ask when the reverse inclusion holds: when is every global bounded
ratio generated by bounded ratios already visible on the quadratic
slices?

Part~I~\cite{CGLWW} of this paper answers this question in three variables. It proves
that
\[
\BR(\cL_S^+)=\Cone\bigl(\cG_3(S)\bigr)
\]
for every degree and every $M$-convex support
$S\subseteq\Delta_3^d$. This result was also obtained by A. Bathija, P. Rohatgi, and D. Soskin independently \cite{BRS}. The purpose of the present paper is to determine
exactly how far this quadratic local-to-global principle extends beyond
the ternary setting.

Our main result gives a complete classification.

\begin{theorem}[Complete quadratic local-to-global classification]
\label{thm:main-classification}
Let $n\ge1$ and $d\ge2$. The following are equivalent.
\begin{enumerate}[label=\textup{(\roman*)}]
    \item For every nonempty $M$-convex support
    $S\subseteq\Delta_n^d$,
    \[
    \BR(\cL_S^+)
    =
    \Cone\bigl(\cG_n(S)\bigr).
    \]

    \item One has
    \[
    n\le3,
    \qquad\text{or}\qquad
    d=2,
    \qquad\text{or}\qquad
    (n,d)=(4,3).
    \]
\end{enumerate}
Moreover, whenever \emph{(ii)} fails, the local-to-global principle
already fails for full support:
\[
\Cone\bigl(\cG_n(\Delta_n^d)\bigr)
\subsetneq
\BR(\cL_{n,d}^+).
\]
\end{theorem}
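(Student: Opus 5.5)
The argument naturally splits into the positive direction (ii)$\Rightarrow$(i) and the negative direction, where the negative direction is proved in the stronger full-support form.

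\textbf{Positive direction.} The case $n\le 3$ is exactly the main theorem of Part~I~\cite{CGLWW}, so there is nothing new to prove. The case $n\le 1$ is trivial, since the support is a single monomial.

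For $d=2$, the polynomial $f$ is itself its only quadratic Hessian slice. Hence $\cG_n(S)$ consists of the bounded ratios of $\cL_S^+$ itself, and equality holds by definition, provided the extension map is the identity in this degree.

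The substantive positive case is $(n,d)=(4,3)$. Here I would mimic the Part~I strategy. Write $\BR(\cL_S^+)$ as the dual of the recession cone of the image of $\cL_S^+$ under $\log$, that is, the tropicalization. Then $w$ is a bounded ratio iff $\langle w,v\rangle\le 0$ for every $v$ in the tropical Lorentzian locus, which is the set of $M$-concave functions on $S$. The cone $\Cone(\cG_4(S))$ is dual to the set of functions $v$ whose restriction to each quadratic slice $\alpha+\Delta_4^2$ is $M$-concave; call these \emph{locally} $M$-concave. So (i) is equivalent to the statement that local $M$-concavity on the quadratic slices implies global $M$-concavity of $v$ on $S$, up to closure.

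For $d=3$ and $n=4$, I would prove this local-to-global statement for $M$-concave functions directly. The exchange axiom needs only to be checked for pairs $\alpha,\beta$ at distance $\|\alpha-\beta\|_1\le 6$ in $\Delta_4^3$. I would reduce by induction on distance to the pairs not contained in a common slice and finish with a finite case analysis, using the $S_4$ symmetry and the $M$-convexity of $S$ to bound the number of configurations. A computer-checkable enumeration is acceptable here.

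\textbf{Negative direction.} I would first exhibit explicit witnesses at the two minimal cases $(4,4)$ and $(5,3)$. Dually, I need a function $v$ on the full simplex $\Delta_n^d$ that is $M$-concave on every quadratic slice but not globally $M$-concave. The natural source is a transportation or cut metric: take $v(\alpha)$ to be a quadratic form $-\sum_{i<j} m_{ij}\alpha_i\alpha_j$, perturbed so that each $\Delta^2$ slice sees a tree-like metric, while some global exchange fails. Concretely, in $(5,3)$ I would search among $v$ supported near the center, using a non-$\ell_1$-embeddable or non-tree metric on five points that is tree-like on every four-point submatrix seen by the slices.

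A separating bounded ratio $w$ is then obtained by Farkas duality, with $w\in\BR\setminus\Cone(\cG_n)$. Its boundedness needs an actual Lorentzian argument. I would derive it from a known global inequality, for example a log-concavity chain obtained by composing Hessian slices, or via the normalized-coefficient log-concavity along $M$-convex directions.

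Propagation then has two parts:
\begin{itemize}[label=--]
\item \emph{Degree aggregation.} Multiplying by $x_1^{d-d_0}$, or more precisely pulling back along $S\mapsto S+e_1$, gives a face of $\Delta_n^{d}$ whose restriction preserves both cones.
\item \emph{Variable aggregation.} Setting $x_n$ to a specialization such as $x_{n-1}+x_n$ maps Lorentzian polynomials to Lorentzian polynomials, and this lets one embed the witness into more variables.
\end{itemize}
Combined, these carry the $(4,4)$ witness to all $n\ge4$, $d\ge4$, and the $(5,3)$ witness to all $n\ge5$, $d=3$. One must check that the faces inherit the full-support statement, namely that the restriction of a generated ratio stays generated and the lifted ratio stays bounded.

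\textbf{Main obstacle.} I expect the hardest step to be the two minimal obstructions: finding explicit witnesses and proving that $w$ is genuinely bounded on $\cL^+$. The positive $(4,3)$ case is a finite but large verification. For the obstructions I would first try a computer search for extreme rays of the tropical Lorentzian cone (globally $M$-concave functions) versus locally $M$-concave ones, and then certify the gap by hand.
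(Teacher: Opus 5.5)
\textbf{There is a genuine gap: the duality step misidentifies the polar of the local cone, and this breaks both directions.}

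You take $\BR(\cL_{S_\beta}^+)$ to be the polar of the set of $M$-concave functions on $S_\beta$. Then the polar of $\Cone(\cG_n(S))$ consists of the $q$ whose restriction to each slice lies in the bipolar, i.e.\ the closed \emph{conic hull} of $M$-concave functions on $S_\beta$. It does not consist of the $q$ whose restrictions are themselves $M$-concave. The two differ on a branching slice:
\begin{itemize}
\item $M$-concavity there includes the non-convex three-term condition that the largest of the three opposite-pair sums $q(\beta+e_i+e_j)+q(\beta+e_k+e_\ell)$, etc., is attained twice;
\item the polar of the slice generators gives only linear inequalities (for full support in four variables, just the triangular ones).
\end{itemize}

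With your formulation the question becomes vacuous. By Murota's local exchange theorem, only pairs $\alpha,\alpha'$ with $\|\alpha-\alpha'\|_1=4$ need checking, and each such pair lies in the slice indexed by $\alpha\wedge\alpha'$. So ``$M$-concave on every quadratic slice'' already implies global $M$-concavity for \emph{every} $(n,d)$. Two consequences follow:
\begin{itemize}
\item Your $(4,3)$ case analysis would prove a triviality rather than (i). The real content at $(4,3)$ is that a function satisfying only the linear local inequalities, and possibly violating the three-term condition on a branching slice, is a nonnegative combination of globally $M$-concave functions. The paper obtains this from support reduction, the support-graph lemma and the explicit star decomposition, and then degenerates along one atom.
\item The obstruction you plan to search for at $(4,4)$ and $(5,3)$ (locally but not globally $M$-concave) does not exist. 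A valid witness must satisfy the triangular (and, for $n=5$, pentagonal) inequalities while violating the three-term condition somewhere. For example, $q_4=q^{\{2,3\}}+q^{\{3,4\}}-2\varepsilon_{(2,0,2,0)}$ has opposite-pair sums $6,8,6$ on the slice $\beta=(0,1,0,1)$.
\end{itemize}

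\textbf{Boundedness.} Your boundedness plan has the same blind spot. Any product of slice inequalities (your ``log-concavity chain obtained by composing Hessian slices'') lies in $\Cone(\cG_n(S))$ by definition, so it cannot certify a nonlocal ratio. The missing idea is a case split that depends on $f$. $R_4$ equals four triangular ratios times $B/A$, and also four other triangular ratios times $B/C$. Neither $B/A$ nor $B/C$ is bounded, but the four-point inequality $\sqrt B\le\sqrt A+\sqrt C$ forces $\min\{B/A,B/C\}\le 4$. The same argument works for $R_5$.

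\textbf{Degree propagation.} Restricting to the sub-simplex $(d-4)e_1+\Delta_4^4$ does not preserve the local cone. For $\beta\in\Delta_4^{d-2}$ with $\beta_1=d-5$, the generator $g_\beta^{1j\mid k}$ restricts to the unbalanced, hence unbounded, vector $\varepsilon_{\beta+e_1+e_j}-\varepsilon_{\beta+e_1+e_k}$. Boundedness of the shifted ratio does follow from $R_d(f)=R_4(\partial^\gamma f)$. Separation, however, needs a new functional on all of $\Delta_4^d$; the paper builds $q_d$ from the same two cut functions with the parity-dependent correction $\varepsilon_d$.

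Your variable-aggregation step matches the paper's lemma and is fine.
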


Thus the only nonquadratic positive case beyond three variables is the
quaternary cubic case. Equivalently, the classification may be displayed
as
\[
\begin{array}{c|ccc}
 & d=2 & d=3 & d\ge4\\ \hline
n\le3 & \checkmark & \checkmark & \checkmark\\
n=4   & \checkmark & \checkmark & \times\\
n\ge5 & \checkmark & \times & \times
\end{array}
\]
where $\checkmark$ means equality for every $M$-convex support and
$\times$ means failure already for full support.

The proof has one exceptional positive case and two minimal negative
cases. The positive case is $(n,d)=(4,3)$. Unlike the ternary case, a
quadratic slice in four variables can exhibit a genuine four-point
branching phenomenon. We analyze the possible $M$-convex cubic supports
and reduce the branching case to a finite support-graph problem. On the
dual side, the relevant local polar directions admit a nonnegative
decomposition into $M$-concave directions. Exponential degeneration
along such a direction then rules out any bounded ratio outside the
quadratic local cone. This proves the local-to-global theorem for every
$M$-convex support in $\Delta_4^3$.

The failure region has, with respect to the coordinatewise order on
pairs $(n,d)$, exactly two minimal elements:
\[
(4,4)
\qquad\text{and}\qquad
(5,3).
\]
For each of these pairs we construct an explicit full-support bounded
ratio which does not belong to the quadratic local cone. The two
constructions have a common structure. Boundedness follows from
quadratic matrix inequalities, while nonlocality is certified by constructing a
separating functional obtained from perturbing a single coefficient of an explicit polar directions.

We then propagate the four-variable quartic obstruction through every higher degree by differentiation together with a degree-uniform family of separating functionals. A variable-aggregation lemma propagates each obstruction to every larger number of variables. Starting from \((4,4)\) and \((5,3)\), these arguments produce full-support counterexamples throughout the entire failure region. Together with the positive cases, this proves Theorem~\ref{thm:main-classification}.

Huang, Huh, Soskin, and Wang~\cite{HHSW} study bounded ratios for
Lorentzian matrices. In four variables the primitive bounded ratios are triangular, while in five variables both triangular and pentagonal ratio appear. These descriptions provide the local generators used in our separation arguments. Baldi and Kummer~\cite{BaldiKummer2026} identify,
for a fixed $M$-convex support, the bounded-ratio cone of Lorentzian
polynomials with the dual cone of $M$-convex functions. From this
perspective, Theorem~\ref{thm:main-classification} determines exactly
when this global cone is generated by the bounded-ratio cones of the
quadratic Hessian slices.

The paper is organized as follows.
Section~\ref{sec:preliminaries} fixes notation, defines the quadratic
local cone, and settles the easy positive cases.
Section~\ref{sec:quaternary-cubics} proves the exceptional positive case
$(n,d)=(4,3)$ for arbitrary $M$-convex support.
Section~\ref{sec:minimal-obstructions} constructs the two minimal
full-support obstructions at $(4,4)$ and $(5,3)$.
Section~\ref{sec:propagation} first propagates in degree, then propagates
in the number of variables, and finally completes the proof of
Theorem~\ref{thm:main-classification}.

\section{Preliminaries and the easy positive cases}
\label{sec:preliminaries}

Throughout, \(\N:=\mathbb{Z}_{\geq 0}\). For a finite set \(S\), let
\(\varepsilon_\alpha\) denote the standard basis vector of \(\R^S\)
indexed by \(\alpha\in S\), and use the pairing
\[
    \langle w,q\rangle
    =
    \sum_{\alpha\in S}w_\alpha q_\alpha.
\]
For \(C\subseteq\R^S\), our polar convention is
\[
    C^\circ
    :=
    \{q\in\R^S:\langle w,q\rangle\leq 0
      \text{ for all }w\in C\}.
\]
We write \(\Cone(E)\) for the set of finite nonnegative linear
combinations of elements of \(E\). For
\(\alpha=(\alpha_1,\ldots,\alpha_n)\in\N^n\), set
\[
    |\alpha|=\alpha_1+\cdots+\alpha_n,
    \qquad
    [n]=\{1,\ldots,n\},
    \qquad
    \Delta_n^d=\{\alpha\in\N^n:|\alpha|=d\}.
\]
Let \(e_i\) denote the \(i\)-th standard basis vector of
\(\mathbb{R}^n\). For \(S\subseteq\Delta_n^d\), we use factorial
normalization
\[
    f(\mathbf{x})
    =
    \sum_{\alpha\in S}
    c_\alpha\frac{\mathbf{x}^\alpha}{\alpha!},
    \qquad
    \alpha!=\alpha_1!\cdots\alpha_n!,
\]
so that the Hessian of each quadratic derivative is read directly
from the normalized coefficients.

\begin{definition}[\(M\)-convex support]
A nonempty set \(S\subseteq\Delta_n^d\) is \(M\)-convex if, whenever
\(\alpha,\beta\in S\) and \(\alpha_i>\beta_i\), there is an index
\(j\) with \(\alpha_j<\beta_j\) such that
\[
    \alpha-e_i+e_j\in S,
    \qquad
    \beta-e_j+e_i\in S.
\]
\end{definition}

For \(d\geq 2\) and \(\beta\in\Delta_n^{d-2}\), define the
quadratic slice support
\[
    S_\beta
    :=
    \{e_i+e_j:i,j\in[n],\ \beta+e_i+e_j\in S\}
    \subseteq\Delta_n^2
\]
and the corresponding symmetric Hessian array
\[
    H_\beta(f)
    =
    \bigl(c_{\beta+e_i+e_j}\bigr)_{1\leq i,j\leq n},
\]
with the convention that \(c_\gamma=0\) outside \(S\). Every
nonempty \(S_\beta\) is \(M\)-convex: applying the exchange axiom
in \(S\) to \(\beta+\gamma\) and \(\beta+\eta\), and then
subtracting \(\beta\), gives the required exchange in \(S_\beta\).
By Br\"and\'en--Huh~\cite[Theorem~2.25]{BH}, \(f\) is Lorentzian
if and only if \(S\) is \(M\)-convex and every \(H_\beta(f)\) has
at most one positive eigenvalue.

All theorem numbers cited from~\cite{BH} refer to the revised
version arXiv:1902.03719v8; the bibliography also lists the
published version.

For a fixed support \(S\), let \(\cL_S^+\) be the class of
Lorentzian polynomials whose normalized coefficients are positive
exactly on \(S\). For \(w\in\R^S\), define
\[
    R_w(f)
    :=
    \prod_{\alpha\in S}c_\alpha^{w_\alpha},
    \qquad
    \BR(\cL_S^+)
    :=
    \left\{
        w\in\R^S:
        \sup_{f\in\cL_S^+}R_w(f)<\infty
    \right\}.
\]
It follows directly from the definition that
\(\BR(\cL_S^+)\) is a convex cone. For full support, we abbreviate
\[
    \cL_{n,d}^+
    :=
    \cL_{\Delta_n^d}^+.
\]

If
\[
    v
    =
    \sum_{\gamma\in S_\beta}
    v_\gamma\varepsilon_\gamma
    \in\R^{S_\beta},
\]
write
\[
    \widetilde{v}^{\,\beta}
    :=
    \sum_{\gamma\in S_\beta}
    v_\gamma\varepsilon_{\beta+\gamma}
    \in\R^S.
\]
The \emph{quadratic local generators} are
\[
    \cG_n(S)
    :=
    \bigcup_{\substack{
        \beta\in\Delta_n^{d-2}\\
        S_\beta\neq\varnothing
    }}
    \left\{
        \widetilde{v}^{\,\beta}:
        v\in\BR(\cL_{S_\beta}^+)
    \right\}.
\]
Thus, \(\Cone(\cG_n(S))\) is the cone generated by all bounded
ratios coming from quadratic Hessian slices. Since every Hessian
slice of a Lorentzian polynomial is Lorentzian,
\begin{equation}\label{eq:local-contained}
    \Cone(\cG_n(S))
    \subseteq
    \BR(\cL_S^+)
\end{equation}
for every \(M\)-convex support \(S\).

We shall also use \(M\)-concave functions. A function
\(q:S\to\R\) on an \(M\)-convex set is \(M\)-concave if, whenever
\(\alpha_i>\beta_i\), one can choose \(j\) as in the exchange
axiom so that
\[
    q(\alpha)+q(\beta)
    \leq
    q(\alpha-e_i+e_j)+q(\beta+e_i-e_j).
\]
It is enough to check the local exchanges with
\(\|\alpha-\beta\|_1=4\)~\cite[Theorem~6.4]{Murota}. We use the
standard degeneration theorem~\cite[Theorem~3.14]{BH}: if \(q\)
is \(M\)-concave, then
\begin{equation}\label{eq:mconcave-degeneration}
    f_t(\mathbf{x})
    =
    \sum_{\alpha\in S}
    e^{tq(\alpha)}
    \frac{\mathbf{x}^\alpha}{\alpha!}
\end{equation}
belongs to \(\cL_S^+\) for every \(t\geq 0\). In the notation
of~\cite[Theorem~3.14]{BH}, this is the choice \(\nu=-q\) and
\(u=e^{-t}\in(0,1]\), so that
\(u^{\nu(\alpha)}=e^{tq(\alpha)}\).

\begin{proposition}[The easy positive cases]
\label{prop:easy-positive}
The quadratic local-to-global equality holds for every
\(M\)-convex support in each of the following cases:
\begin{enumerate}[label=\textup{(\roman*)}]
    \item \(d=2\) and arbitrary \(n\);
    \item \(n\leq 3\) and arbitrary \(d\geq 2\).
\end{enumerate}
\end{proposition}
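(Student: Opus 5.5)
Both parts will follow from results already available: part (i) is a trivial observation, and part (ii) is quoted from Part~I~\cite{CGLWW}. Throughout I use the inclusion \eqref{eq:local-contained}, so only the reverse containment \(\BR(\cL_S^+)\subseteq\Cone(\cG_n(S))\) needs proof.

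For part (i), let \(d=2\). Then \(\Delta_n^{d-2}=\{0\}\), so there is exactly one slice, \(\beta=0\). For it, \(S_0=\{e_i+e_j:e_i+e_j\in S\}=S\), and the lift \(v\mapsto\widetilde v^{\,0}\) is the identity map \(\R^{S_0}\to\R^S\). Hence \(\cG_n(S)=\BR(\cL_{S}^+)\). Since \(\BR(\cL_S^+)\) is already a convex cone, \(\Cone(\cG_n(S))=\BR(\cL_S^+)\). The only point to check is that the Hessian array \(H_0(f)\) is exactly the coefficient matrix of \(f\) under factorial normalization. This holds because \(c_{2e_i}\) multiplies \(x_i^2/2\) and \(c_{e_i+e_j}\) multiplies \(x_ix_j\), so the two classes \(\cL_{S_0}^+\) and \(\cL_S^+\) coincide.

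For part (ii), the case \(n\le3\) is exactly the main theorem of Part~I~\cite{CGLWW} (also obtained in~\cite{BRS}). That theorem gives \(\BR(\cL_S^+)=\Cone(\cG_3(S))\) for every \(d\) and every \(M\)-convex \(S\subseteq\Delta_3^d\). The cases \(n=1\) and \(n=2\) reduce to it by padding with zero coordinates. An \(M\)-convex \(S\subseteq\Delta_n^d\) with \(n<3\) embeds as an \(M\)-convex set \(S'\subseteq\Delta_3^d\) whose last coordinates vanish. Lorentzian polynomials on \(S'\) are precisely Lorentzian polynomials on \(S\) in fewer variables, since a polynomial not involving some variable is Lorentzian if and only if its restriction is. The slices also match: \(S'_{\beta'}\) is nonempty only when \(\beta'\) has zero padded coordinates, and in that case it is the padded copy of \(S_\beta\). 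Therefore both cones correspond under the identification \(\R^{S}=\R^{S'}\).

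This padding compatibility is the only step needing care. Alternatively, if Part~I already states its result for \(n\le 3\), one can cite it directly. For \(n=1\) there is also a direct argument: \(S=\{de_1\}\), the single slice is \(S_{(d-2)e_1}=\{2e_1\}\), and both cones equal \(\{0\}\), because \(c_{de_1}\) can be any positive number.

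Since the reduction steps are bookkeeping, I do not expect a substantive obstacle in this proposition.
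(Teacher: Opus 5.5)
Your proposal is correct and matches the paper's proof: the $d=2$ case holds because $\beta=0$ is the only slice and $S_0=S$. The $n\le 3$ case cites the ternary theorem of Part~I and treats $n=1,2$ by identifying $S$ with a coordinate face of $\Delta_3^d$; your extra bookkeeping (only padded-zero $\beta'$ give nonempty slices, plus the direct $n=1$ check) spells out what the paper states in one line.
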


\begin{proof}
If \(d=2\), the only slice is \(\beta=0\), so the definition of
\(\cG_n(S)\) gives
\[
    \Cone(\cG_n(S))
    =
    \BR(\cL_S^+).
\]
For \(n=3\), the assertion is the ternary local-to-global theorem
of~\cite[Theorem~3.6]{CGLWW}. The cases \(n=1,2\) follow by
adjoining zero variables: identifying \(S\subseteq\Delta_n^d\)
with the corresponding coordinate face of \(\Delta_3^d\)
preserves the coefficient locus, Lorentzianity of all quadratic
slices, and the local cone.
\end{proof}

\section{The exceptional positive case: quaternary cubics}
\label{sec:quaternary-cubics}

The only nonquadratic positive case with more than three variables is
$(n,d)=(4,3)$. The proof is more delicate because a quadratic slice can
contain all six off-diagonal points, producing a genuine four-point branching
phenomenon.

\begin{theorem}[Quaternary cubics]
\label{thm:quaternary-cubic-local-global}
For every nonempty $M$-convex support $S\subseteq\Delta_4^3$,
\[
    \BR(\cL_S^+)=\Cone\bigl(\cG_4(S)\bigr).
\]
\end{theorem}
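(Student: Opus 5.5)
We prove the two inclusions separately. The inclusion \(\Cone(\cG_4(S))\subseteq\BR(\cL_S^+)\) is \eqref{eq:local-contained}, so only the reverse inclusion needs work. Both cones are closed convex cones; the local one is closed because it is a finite sum of polyhedral cones, since each \(\BR(\cL_{S_\beta}^+)\) is polyhedral by the description of Lorentzian-matrix bounded ratios in~\cite{HHSW}. We therefore argue by duality. Take \(w\notin\Cone(\cG_4(S))\). Separation gives \(q\in\Cone(\cG_4(S))^\circ\) with \(\langle w,q\rangle>0\). The condition \(q\in\Cone(\cG_4(S))^\circ\) unpacks slice by slice: for every \(\beta\in\Delta_4^1\), i.e. \(\beta=e_k\), the restriction \(q_\beta:=(q(\beta+\gamma))_{\gamma\in S_\beta}\) lies in \(\BR(\cL_{S_\beta}^+)^\circ\).

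The goal is to show that every such \(q\) is a nonnegative combination \(q=\sum_m \lambda_m q^{(m)}\) of \(M\)-concave functions \(q^{(m)}\) on \(S\). Granting this, some \(q^{(m)}\) satisfies \(\langle w,q^{(m)}\rangle>0\). The degeneration \eqref{eq:mconcave-degeneration} then gives
\[
R_w(f_t)=e^{t\langle w,q^{(m)}\rangle}\to\infty ,
\]
so \(w\notin\BR(\cL_S^+)\), as required. (This is also consistent with the Baldi--Kummer identification of \(\BR\) with the dual of \(M\)-concave functions.) By Murota's local criterion, \(M\)-concavity on \(S\subseteq\Delta_4^3\) only requires the exchange inequalities for pairs \(\alpha,\beta\) at \(\ell_1\)-distance \(4\). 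For cubics, every such pair has a common point \(\alpha\wedge\beta\), which is either of degree \(1\) (so the pair lies in one slice \(e_k+\Delta_4^2\)) or of degree \(\geq 1\) anyway. Indeed, distance \(4\) means \(\alpha-\beta=e_i+e_j-e_k-e_l\), so \(\alpha\) and \(\beta\) share a degree-\(1\) part. Hence every local exchange condition lives inside a single quadratic slice. This reduction is the key observation.

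So the problem reduces to slice-level statements. First, an element of \(\BR(\cL_{S_\beta}^+)^\circ\) should be \(M\)-concave on \(S_\beta\), or at least a nonnegative sum of \(M\)-concave functions on \(S_\beta\). Second, these slice decompositions must glue into a decomposition on all of \(S\). We would first classify \(S_\beta\) by its off-diagonal graph. If \(S_\beta\) misses some off-diagonal pair, the slice behaves like a ternary or bipartite configuration. In that case the polar cone is cut out exactly by the exchange inequalities, so \(q_\beta\) itself is \(M\)-concave and we can take a single term \(q^{(m)}=q\). The genuinely new situation is the branching case, where \(S_\beta\) contains all six \(e_i+e_j\) with \(i<j\). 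There the triangular bounded ratios of~\cite{HHSW} give the polar cone, and its extreme rays need not be \(M\)-concave: the four-point condition on \(q_{ij}+q_{kl}\), \(q_{ik}+q_{jl}\), \(q_{il}+q_{jk}\) (max attained twice) is stronger than the triangle-type conditions. We would compute the extreme rays of the triangular polar cone on \(K_4\) and show that each one is a sum of cut-type and tree-metric-type directions. Each of these is \(M\)-concave, being a valuated-matroid or ultrametric-like function.

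The main obstacle is the gluing step in the branching case. A single \(q\) may be non-\(M\)-concave simultaneously in several slices \(e_k+S_{e_k}\) that share points, since \(e_k+e_i+e_j\) lies in the slices \(k\), \(i\), \(j\). The slice-wise decompositions therefore have to be chosen compatibly. Our plan is to classify the \(M\)-convex supports \(S\subseteq\Delta_4^3\) up to symmetry and determine which of them have branching slices; such an \(S\) must contain many of the points \(e_i+e_j+e_k\). This reduces the compatibility question to a finite support-graph problem. We would then solve that problem by writing the candidate decomposition in terms of cut functions \(\alpha\mapsto\min(\alpha(A),c)\), which are \(M\)-concave on all of \(S\). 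We expect the cut-cone structure to let us decompose \(q\) globally rather than slice by slice, and we would verify this on the finitely many support types, with computer assistance if necessary.
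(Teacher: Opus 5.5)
Your overall frame matches the paper's. You separate $w$ from the local cone, write every polar functional as a nonnegative combination of $M$-concave functions on $S$, and conclude with the degeneration \eqref{eq:mconcave-degeneration}. Your observation that $|\alpha\wedge\beta|=1$ places every local exchange inside one slice, and your treatment of supports with no branching slice, are also exactly what the paper does. One small difference: the paper separates from the finite family $\cG_4^{\mathrm{el}}(S)$ of affine, triangular and degenerate four-point directions. That makes closedness automatic and needs only Lemma~\ref{lem:app-quadratic-inequalities}. Your appeal to the HHSW triangular description concerns full-support slices, whereas a branching slice may lack diagonal entries (e.g.\ $3e_m\notin S$).

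\textbf{The gap is the branching case, which is the whole content of the theorem beyond the easy cases.} You give a plan (classify supports, decompose via cut functions, ``with computer assistance if necessary'') but no decomposition, and the building blocks you propose do not work as stated. Cut functions $\alpha\mapsto\min(\alpha(A),c)$ are $M$-concave on $\Delta_4^3$, but their restrictions to a smaller $M$-convex support need not be. For instance, take $S=\{z_1,z_2,z_3,z_4,2e_1+e_2,2e_1+e_3\}$ with $z_i=\mathbf 1-e_i$, which is $M$-convex. The exchange between $2e_1+e_3$ and $z_3$ at coordinate $3$ has the single feasible branch $(2e_1+e_2,z_2)$, and $q(\alpha)=\min(\alpha_1+\alpha_2,2)$ would need $4\leq 3$.

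Two things couple the slices and make independent slice-by-slice decompositions incompatible. One is such missing-branch equalities. The other is that $2e_j+e_i$ is simultaneously the diagonal entry $2e_j$ of slice $e_i$ and an off-diagonal entry of slice $e_j$.

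The paper resolves this as follows.
\begin{enumerate}[label=\textup{(\arabic*)}]
\item A branching slice $S_i$ forces one of two cases. Either $S=e_i+T$, where $\partial_i$ identifies $\BR(\cL_S^+)$ with $\BR(\cL_{S_i}^+)$, so $w$ is itself a local generator. Or $z_1,\dots,z_4\in S$.
\item In the second case, subtract a linear function to get $q(z_i)=0$. This is harmless because $w$ and all generators are balanced.
\item Then $S\subseteq\{z_i,\,2e_i+e_j,\,3e_i\}$, and $q$ becomes a weighted digraph with $s_{ij}=-q(2e_i+e_j)$ and $r_i=-q(3e_i)$. The polar conditions read $s_{ij}\geq 0$, $s_{ij}\leq s_{ik}+s_{ji}$, $s_{ij}+s_{ik}\leq r_i$, and $s_{ij}=s_{ik}$ when $N_i=\{j,k\}$. $M$-concavity additionally requires each full row to attain its minimum twice.
\item The star decomposition subtracts the row minima $m_i$. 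It uses the capacity estimate $(u_{ij}-u_{ji})_+\leq m_j$, which follows from $s_{ij}\leq s_{ik}+s_{ji}$, to absorb asymmetric residuals on mutual pairs into stars centered at $j$, with product weights over leaf sets.
\item The remainder goes into two-cycle, one-edge and pure $r_i$ atoms. Each atom has rows of the form $(1,1)$, $(1,1,1)$ or $(1,0,0)$ and is visibly $M$-concave.
\end{enumerate}
Some global bookkeeping of this kind is indispensable, and your proposal does not supply it.
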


The proof is based on separation. We first extract a finite family of
elementary bounded-ratio directions from the quadratic slices. If no slice is
branching, every functional that is nonpositive on these directions is
$M$-concave. If a branching slice occurs, support reduction either reduces the
problem to the quadratic case or puts the support in a form that can be
encoded by a directed graph. In the latter case, the star decomposition
expresses the relevant functional as a finite nonnegative combination of
$M$-concave functions. Exponential degeneration then rules out a separating
functional for a global bounded ratio.

\subsection{Elementary local directions and support reduction}

We first record the quadratic inequalities used throughout the paper. In this section, they provide the elementary local directions needed for the quaternary cubic argument; they will also supply the coefficient bounds for the two minimal obstructions. The four-point inequality appears in~\cite[Proposition~3.1]{HHSW}. The assertions of the following lemma also appear, in an equivalent formulation, in Baker--Huh--Kummer--Lorscheid~\cite[Lemma~5.15]{BHKL2}. We include a direct proof for completeness.

\begin{lemma}[Quadratic inequalities]
\label{lem:app-quadratic-inequalities}
Let $n\geq 1$, and let $P=(p_{ij})_{1\leq i,j\leq n}$ be symmetric and
entrywise nonnegative, with at most one positive eigenvalue. For distinct
$i,j$,
\[
    p_{ii}p_{jj}\leq p_{ij}^2,
\]
and for pairwise distinct $i,j,k$,
\[
    p_{ii}p_{jk}\leq 2p_{ij}p_{ik}.
\]
If $i,j,k,\ell$ are pairwise distinct, then
\[
    \sqrt{p_{ij}p_{k\ell}},\qquad
    \sqrt{p_{ik}p_{j\ell}},\qquad
    \sqrt{p_{i\ell}p_{jk}}
\]
are the side lengths of a possibly degenerate triangle. In particular, if one
of the three opposite-edge products vanishes, then the other two are equal.
\end{lemma}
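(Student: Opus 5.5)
The plan is to reduce everything to principal submatrices. A principal submatrix of a symmetric matrix with at most one positive eigenvalue also has at most one positive eigenvalue, by Cauchy interlacing. So each inequality can be checked on the $2\times2$, $3\times3$ or $4\times4$ principal submatrix indexed by the relevant indices.

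\emph{Two indices.} The submatrix $\begin{pmatrix}p_{ii}&p_{ij}\\p_{ij}&p_{jj}\end{pmatrix}$ is entrywise nonnegative with at most one positive eigenvalue. If its trace is positive, the eigenvalues cannot both be nonnegative unless one is zero, so the determinant $p_{ii}p_{jj}-p_{ij}^2$ is $\le0$. If the trace is zero, then $p_{ii}=p_{jj}=0$ and the inequality is trivial.

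\emph{Three indices.} I use the quadratic-form viewpoint. Suppose $P$ has at most one positive eigenvalue and $P\ge0$ entrywise. By Cauchy--Schwarz for Lorentzian forms (the reverse Cauchy--Schwarz inequality), for any $x$ with $x^TPx>0$ and any $y$,
\[
(x^TPy)^2\ge (x^TPx)(y^TPy).
\]
Take $x=e_i$ when $p_{ii}>0$ (if $p_{ii}=0$ there is nothing to prove) and $y=e_j-te_k$ with $t\ge0$. Then
\[
(p_{ij}-tp_{ik})^2\ge p_{ii}\,(p_{jj}+t^2p_{kk}-2tp_{jk})\ge p_{ii}(-2tp_{jk})+p_{ii}t^2p_{kk}.
\]
Since $p_{jj}\ge0$, dropping it gives $p_{ij}^2-2tp_{ij}p_{ik}+t^2p_{ik}^2\ge -2tp_{ii}p_{jk}+t^2p_{ii}p_{kk}$. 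Now use the two-index inequality $p_{ii}p_{kk}\le p_{ik}^2$. If $p_{ik}>0$, choose $t=p_{ij}/p_{ik}$. The left side then becomes $0$, and the inequality reads $0\ge -2tp_{ii}p_{jk}+t^2p_{ii}p_{kk}$, which by itself points the wrong way.

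So instead I choose $y=e_j+te_k$ with $t\ge0$ and argue via the determinant. Write
\[
Q(t)=(p_{ij}+tp_{ik})^2-p_{ii}(p_{jj}+2tp_{jk}+t^2p_{kk})\ge0
\]
for all $t$. Setting $t=-s$ with $s>0$ is also allowed, since the Cauchy--Schwarz step works for any $y$:
\[
(p_{ij}-sp_{ik})^2\ge p_{ii}p_{jj}-2sp_{ii}p_{jk}+s^2p_{ii}p_{kk}.
\]
At $s=p_{ij}/p_{ik}$ this gives $2sp_{ii}p_{jk}\ge p_{ii}p_{jj}+s^2p_{ii}p_{kk}\ge0$, which is again a lower bound. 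I therefore expect the correct sign to come from $(P$ restricted to $\{e_j,e_k\})$ combined with $x=e_j+e_k$ rather than $x=e_i$.

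The cleaner route is the $3\times3$ determinant sign: a nonnegative $3\times3$ matrix with exactly one positive eigenvalue has $\det\ge0$. In the key case $p_{jj}=p_{kk}=0$ we get
\[
\det = 2p_{ij}p_{ik}p_{jk}-p_{ii}p_{jk}^2\ge0,
\]
which is exactly $p_{ii}p_{jk}\le 2p_{ij}p_{ik}$ (when $p_{jk}>0$; otherwise it is trivial). For general diagonal entries I will reduce to this case by monotonicity: decreasing diagonal entries preserves the property of having at most one positive eigenvalue, because $P-D$ with $D\ge0$ diagonal satisfies $P-D\preceq P$ and Weyl's inequality applies. This monotonicity step is the main point to verify carefully. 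The one remaining subtlety is that the determinant could be $0$ with zero eigenvalues, but $\det\ge0$ still holds in that case.

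\emph{Four indices.} Replace $p_{ii},p_{jj},p_{kk},p_{\ell\ell}$ by $0$, again using the monotonicity step. The resulting matrix has zero diagonal and at most one positive eigenvalue. Its trace is $0$, so if it is nonzero it has exactly one positive eigenvalue and hence $\det\le0$; the case with all four products zero is trivial. For a $4\times4$ zero-diagonal symmetric matrix, the determinant equals
\[
a^2+b^2+c^2-2ab-2bc-2ca,\qquad a=p_{ij}p_{k\ell},\ b=p_{ik}p_{j\ell},\ c=p_{i\ell}p_{jk}.
\]
This quantity equals $-(\sqrt a+\sqrt b+\sqrt c)(-\sqrt a+\sqrt b+\sqrt c)(\sqrt a-\sqrt b+\sqrt c)(\sqrt a+\sqrt b-\sqrt c)$. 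So $\det\le0$ is precisely the set of triangle inequalities for $\sqrt a,\sqrt b,\sqrt c$. If one product vanishes, the two remaining triangle inequalities force the other two products to be equal.
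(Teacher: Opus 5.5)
Your final argument is essentially the paper's proof: pass to principal submatrices, zero out the relevant diagonal entries (subtracting a positive semidefinite diagonal matrix cannot increase the number of positive eigenvalues), and read off the signs of the $2\times2$, $3\times3$ and zero-diagonal $4\times4$ determinants. The only cosmetic difference is in the last step. You pass from $\det B\le0$ to the triangle inequalities via Heron's factorization, which needs the one-line remark that $-\sqrt a+\sqrt b+\sqrt c$, $\sqrt a-\sqrt b+\sqrt c$, $\sqrt a+\sqrt b-\sqrt c$ pairwise sum to $2\sqrt c,2\sqrt b,2\sqrt a\ge0$, so if one were negative the product would be negative. The paper instead writes $\det B=(A-C-E)^2-4CE$ and takes square roots.

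You can cut the abandoned Cauchy--Schwarz detour. In fact your $Q(t)\ge0$ for $t\ge0$ already works: dropping the nonnegative terms $p_{ii}p_{jj}$ and $t^2p_{ii}p_{kk}$ gives $(p_{ij}+tp_{ik})^2\ge 2t\,p_{ii}p_{jk}$, and $t=p_{ij}/p_{ik}$ yields $p_{ii}p_{jk}\le 2p_{ij}p_{ik}$ without any diagonal reduction (degenerate cases by letting $t\to0$ or $t\to\infty$).
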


\begin{proof}
The principal $2\times 2$ submatrix on $\{i,j\}$ has at most one positive
eigenvalue, and hence has nonpositive determinant. This gives
$p_{ii}p_{jj}\leq p_{ij}^2$.

For the second inequality, decrease the $j$- and $k$-diagonal entries of the
principal submatrix on $\{i,j,k\}$ to zero. Subtracting a positive
semidefinite matrix cannot increase the number of positive eigenvalues, so the
resulting matrix still has at most one positive eigenvalue. If $p_{jk}>0$,
its determinant is nonnegative, while
\[
    \det
    \begin{pmatrix}
        p_{ii} & p_{ij} & p_{ik} \\
        p_{ij} & 0      & p_{jk} \\
        p_{ik} & p_{jk} & 0
    \end{pmatrix}
    =p_{jk}\bigl(2p_{ij}p_{ik}-p_{ii}p_{jk}\bigr).
\]
The case $p_{jk}=0$ is immediate.

For the four-point assertion, pass to the $4\times 4$ principal submatrix on
$\{i,j,k,\ell\}$, relabel its indices as $1,2,3,4$, and denote this submatrix
again by $P$. Set
\[
    B=P-\operatorname{diag}(p_{11},p_{22},p_{33},p_{44}).
\]
Again, $B$ has at most one positive eigenvalue. Since $B$ has zero diagonal
and hence trace zero, it follows that $\det B\leq 0$. Set
\[
    A=p_{12}p_{34},\qquad
    C=p_{13}p_{24},\qquad
    E=p_{14}p_{23}.
\]
Expanding the determinant gives
\[
    \det B
    =A^2+C^2+E^2-2AC-2AE-2CE
    =(A-C-E)^2-4CE.
\]
Consequently,
\[
    (\sqrt C-\sqrt E)^2\leq A\leq(\sqrt C+\sqrt E)^2,
\]
and therefore
\[
    |\sqrt C-\sqrt E|\leq\sqrt A\leq\sqrt C+\sqrt E.
\]
These are precisely the triangle inequalities, including the degenerate
case. If one side length vanishes, the other two coincide.
\end{proof}

For a nonempty $M$-convex support $T\subseteq\Delta_4^2$, let
$\mathcal E(T)\subseteq\mathbb R^T$ consist of all vectors of the forms
\begin{align}
    &\varepsilon_{2e_i}+\varepsilon_{2e_j}
    -2\varepsilon_{e_i+e_j},
    \label{eq:app-E-af}\\
    &\varepsilon_{2e_i}+\varepsilon_{e_j+e_k}
    -\varepsilon_{e_i+e_j}-\varepsilon_{e_i+e_k},
    \label{eq:app-E-triangular}
\end{align}
whenever all displayed points belong to $T$, where $i\neq j$ in
\eqref{eq:app-E-af} and $i,j,k$ are pairwise distinct in
\eqref{eq:app-E-triangular}. We also include both signs of
\begin{equation}
\label{eq:app-E-fourpoint}
    \varepsilon_{e_i+e_j}+\varepsilon_{e_k+e_\ell}
    -\varepsilon_{e_i+e_k}-\varepsilon_{e_j+e_\ell}
\end{equation}
whenever $i,j,k,\ell$ are pairwise distinct, the four displayed points
belong to $T$, and at least one of $e_i+e_\ell$ and $e_j+e_k$ does not
belong to $T$.

Lemma~\ref{lem:app-quadratic-inequalities} shows that every vector in
$\mathcal E(T)$ is a bounded-ratio direction on $\cL_T^+$. The bounds
associated with \eqref{eq:app-E-af} and
\eqref{eq:app-E-triangular} are $1$ and $2$, respectively, while
\eqref{eq:app-E-fourpoint} is an equality direction.

From now on, fix a nonempty $M$-convex support
$S\subseteq\Delta_4^3$. Write $S_i:=S_{e_i}$ and define
\[
    \cG_4^{\mathrm{el}}(S)
    :=
    \left\{
        \widetilde g^{\,e_i}:
        i\in[4],\ S_i\neq\varnothing,\
        g\in\mathcal E(S_i)
    \right\}.
\]
Every $h\in\cG_4^{\mathrm{el}}(S)$ is balanced, in the sense that
\begin{equation}
\label{eq:app-generator-balanced}
    \sum_{\alpha\in S}h_\alpha\alpha=0.
\end{equation}
Moreover,
\begin{equation}
\label{eq:app-cone-chain}
    \Cone\bigl(\cG_4^{\mathrm{el}}(S)\bigr)
    \subseteq
    \Cone\bigl(\cG_4(S)\bigr)
    \subseteq
    \BR(\cL_S^+).
\end{equation}
The first inclusion follows from the definition of $\mathcal E(S_i)$; the
second follows because every quadratic Hessian slice of a Lorentzian cubic is
Lorentzian.

Call a nonempty slice $S_i$ \emph{branching} if all six off-diagonal points
$e_j+e_k$, $1\leq j<k\leq 4$, belong to $S_i$. Put
\[
    \mathbf 1:=(1,1,1,1),
    \qquad
    z_i:=\mathbf 1-e_i.
\]

\begin{lemma}[Support reduction]
\label{lem:app-support-reduction}
Suppose that $S_i$ is branching. Then either
$S=e_i+T$ for some nonempty $M$-convex set
$T\subseteq\Delta_4^2$, or
\[
    z_1,z_2,z_3,z_4\in S.
\]
\end{lemma}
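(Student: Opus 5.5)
The plan is to first unpack what branching gives us in $S$, and then run a short exchange argument against a fixed point of $S$.

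\emph{Step 1: what branching gives.} If $S_i$ is branching, then $\beta$ and $\beta+e_j+e_k$ with $\beta=e_i$ show that
\[
e_i+e_j+e_k\in S \qquad\text{for all distinct } j,k\in[4].
\]
Taking $\{j,k\}\subseteq[4]\setminus\{i\}$ gives exactly the points $e_i+e_j+e_k=z_\ell$, where $\ell$ is the fourth index. Hence $z_\ell\in S$ for every $\ell\neq i$. So it only remains to show that either $S=e_i+T$ or $z_i\in S$.

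\emph{Step 2: the dichotomy.} If every $\alpha\in S$ has $\alpha_i\geq1$, then $S=e_i+T$ with $T=S_i\subseteq\Delta_4^2$. This $T$ is nonempty, and it is $M$-convex because every nonempty slice is (as noted in the preliminaries). Otherwise, choose $\alpha\in S$ with $\alpha_i=0$. Among all such $\alpha$, take one minimizing $\|\alpha-z_i\|_1$. We want to show $\alpha=z_i$, or else produce $z_i\in S$ directly.

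\emph{Step 3: the exchange.} Suppose $\alpha\neq z_i$. Since $\alpha$ is supported on the three indices other than $i$ and has degree $3$, some index $j\neq i$ has $\alpha_j\ge2$. Apply the exchange axiom to $\alpha$ and $\beta:=z_j=\mathbf 1-e_j\in S$ (from Step 1), at index $j$; this is allowed since $\alpha_j\geq 2>0=\beta_j$. We obtain an index $m$ with $\alpha_m<\beta_m=1$, so $\alpha_m=0$, such that $\alpha-e_j+e_m\in S$ and $\beta-e_m+e_j\in S$.
\begin{itemize}[label=--]
\item If $m=i$, then $\beta-e_i+e_j=\mathbf 1-e_i=z_i\in S$, and we are done.
\item If $m\neq i$, then $\alpha':=\alpha-e_j+e_m$ still has $\alpha'_i=0$. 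It is strictly closer to $z_i$: coordinate $j$ drops from $\alpha_j\ge2$ toward $1$, and coordinate $m$ rises from $0$ to $1$. Thus $\|\alpha'-z_i\|_1=\|\alpha-z_i\|_1-2$, contradicting minimality.
\end{itemize}
Hence $z_i\in S$, and together with Step 1 we get $z_1,\dots,z_4\in S$.

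I do not expect a serious obstacle. The one point needing care is the $m=i$ branch of the exchange, where the exchanged point $\alpha'$ leaves the face $\{\alpha_i=0\}$. The fix is to use the second half of the exchange axiom: the exchanged partner $\beta-e_i+e_j$ is exactly $z_i$.
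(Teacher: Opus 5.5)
Your proof is correct and follows essentially the same route as the paper: you obtain $z_j\in S$ for $j\neq i$ from branching, and then apply symmetric exchange between $\alpha$ and $z_j$ at coordinate $j$, where the branch $m=i$ yields $z_i$ as the exchanged partner. The only difference is cosmetic: you choose $\alpha$ with $\alpha_i=0$ minimizing $\|\alpha-z_i\|_1$, which replaces the paper's explicit case split into $\alpha=2e_j+e_k$ and $\alpha=3e_j$.
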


\begin{proof}
Since $S_i$ is branching, $z_j\in S$ for every $j\neq i$. If
$\alpha_i\geq 1$ for every $\alpha\in S$, then
\[
    S=e_i+T,
    \qquad
    T:=S-e_i\subseteq\Delta_4^2,
\]
and translation by $e_i$ shows directly from the symmetric exchange axiom
that $T$ is $M$-convex.

Otherwise, choose $\alpha\in S$ with $\alpha_i=0$. If $\alpha=z_i$, there
is nothing to prove. Since $|\alpha|=3$, some $j\neq i$ satisfies
$\alpha_j\geq 2$. Suppose first that $\alpha=2e_j+e_k$, and let $\ell$ be
the remaining index. Compare $\alpha$ with
$z_j=e_i+e_k+e_\ell$. At the $j$-coordinate, the exchange axiom can use
only $i$ or $\ell$. In the first case,
\[
    z_j-e_i+e_j=z_i\in S,
\]
whereas in the second case,
\[
    \alpha-e_j+e_\ell=z_i\in S.
\]
If $\alpha=3e_j$, compare $\alpha$ with $z_j$. If the exchange uses $i$,
then again $z_j-e_i+e_j=z_i\in S$. Otherwise it produces
$2e_j+e_k\in S$ for some $k\notin\{i,j\}$, and the preceding case
applies.
\end{proof}

\begin{lemma}[Translated quadratic supports]
\label{lem:app-translated-quadratic}
Suppose that $S=e_i+T$, where
$T\subseteq\Delta_4^2$ is nonempty and $M$-convex. Differentiation with
respect to $x_i$ preserves the normalized coefficient array and
\[
    f\in\cL_S^+
    \quad\Longleftrightarrow\quad
    \partial_i f\in\cL_T^+.
\]
Consequently, under the natural identification of coefficient coordinates,
\[
    \BR(\cL_S^+)=\BR(\cL_T^+).
\]
\end{lemma}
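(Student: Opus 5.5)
We work with the normalized coefficient arrays and verify that $\partial_i$ acts on them by a shift. Write $f=\sum_{\alpha\in S}c_\alpha\mathbf x^\alpha/\alpha!$ with $S=e_i+T$. Since $\partial_i(\mathbf x^\alpha/\alpha!)=\mathbf x^{\alpha-e_i}/(\alpha-e_i)!$ whenever $\alpha_i\ge1$, and every $\alpha\in S$ has $\alpha_i\ge1$, we get
\[
\partial_i f=\sum_{\gamma\in T}c_{e_i+\gamma}\frac{\mathbf x^\gamma}{\gamma!}.
\]
So the normalized coefficient of $\partial_i f$ at $\gamma$ is $c_{e_i+\gamma}$. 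In particular $\partial_i f$ has positive coefficients exactly on $T$, and $c\mapsto(c_{e_i+\gamma})_{\gamma\in T}$ is a bijection from positive arrays on $S$ to positive arrays on $T$. Conversely, every positive array on $T$ comes from a unique cubic $f$ with support $S$, namely $f=\sum_{\gamma\in T} c'_\gamma \mathbf x^{e_i+\gamma}/(e_i+\gamma)!$.

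For the equivalence we use the Hessian characterization \cite[Theorem~2.25]{BH}. The set $S$ is $M$-convex because $T$ is, by the translation argument already used in Lemma~\ref{lem:app-support-reduction}, so $f\in\cL_S^+$ if and only if every $H_\beta(f)$, $\beta\in\Delta_4^1$, has at most one positive eigenvalue. For $\beta=e_i$ we have $H_{e_i}(f)=(c_{e_i+e_j+e_k})_{j,k}$, which is exactly the (only) Hessian array of the quadratic $\partial_i f$. That array has at most one positive eigenvalue if and only if $\partial_i f\in\cL_T^+$, since $T$ is $M$-convex and the degree-$2$ case of the characterization applies.

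The one step needing care is the slices $\beta=e_j$ with $j\ne i$. Here $H_{e_j}(f)=(c_{e_j+e_k+e_l})_{k,l}$, and an entry is nonzero only if $e_j+e_k+e_l\in S$, which forces $i\in\{k,l\}$ unless $j=i$. Hence for $j\ne i$, $H_{e_j}(f)$ is supported on row $i$ and column $i$. This matrix is $\bigl(\begin{smallmatrix} a & b^{\top}\\ b & 0\end{smallmatrix}\bigr)$ after reordering, with the zero block of size $3\times3$ on the indices $\ne i$. Its restriction to the $3$-dimensional subspace $\{x_i=0\}$ is the zero form, so it has no $2$-dimensional positive-definite subspace. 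By Cauchy interlacing it therefore has at most one positive eigenvalue. So these slices impose no condition, and $f\in\cL_S^+\iff\partial_i f\in\cL_T^+$.

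Finally, under the bijection of coefficient arrays, $R_w(f)=R_{w'}(\partial_i f)$, where $w'_\gamma=w_{e_i+\gamma}$. Together with the bijection between the loci $\cL_S^+$ and $\cL_T^+$, this gives $\sup_{\cL_S^+}R_w=\sup_{\cL_T^+}R_{w'}$, and hence $\BR(\cL_S^+)=\BR(\cL_T^+)$ under this identification. The only nonformal point is the observation about the off-$i$ slices, which is settled by the interlacing argument above.
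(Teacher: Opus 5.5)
Your proposal is correct and follows the paper's proof essentially step for step. It uses the same ingredients: the coefficient shift under $\partial_i$ in factorial normalization, the identification of $H_{e_i}(f)$ with the Hessian of $\partial_i f$, the observation that the slices $H_{e_j}(f)$ with $j\neq i$ have the form $\bigl(\begin{smallmatrix} a & b^{\top}\\ b & 0\end{smallmatrix}\bigr)$ and so impose no condition by interlacing, and \cite[Theorem~2.25]{BH}; you additionally make explicit the $M$-convexity of $S$ and the bijection between the two loci, which the paper leaves implicit.
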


\begin{proof}
In factorial normalization,
\[
    \partial_i\left(
        \sum_{\gamma\in T}
        c_{e_i+\gamma}
        \frac{\mathbf x^{e_i+\gamma}}{(e_i+\gamma)!}
    \right)
    =
    \sum_{\gamma\in T}
    c_{e_i+\gamma}\frac{\mathbf x^\gamma}{\gamma!}.
\]
Thus the two normalized coefficient arrays agree. The support condition is
also preserved, since $S=e_i+T$. If $j\neq i$, every nonzero quadratic
derivative $\partial_jf$ is divisible by $x_i$; after placing the $i$th
coordinate first, its Hessian has the form
\[
    \begin{pmatrix}
        a & b^{\mathsf T}\\
        b & 0
    \end{pmatrix}.
\]
Interlacing with the zero $3\times 3$ principal submatrix shows that this
Hessian has at most one positive eigenvalue. The Hessian of
$\partial_i f$ is the normalized coefficient matrix of $\partial_i f$.
The characterization in~\cite[Theorem~2.25]{BH} therefore gives the stated
equivalence.
\end{proof}

\subsection{Branching supports and the star decomposition}

Assume for the rest of this part that
$z_1,z_2,z_3,z_4\in S$. For $i\neq j$, set
\[
    a_{ij}:=2e_i+e_j,
    \qquad
    p_i:=3e_i,
    \qquad
    N_i:=\{j\neq i:a_{ij}\in S\}.
\]
Together with the $z_i$, the points $a_{ij}$ and $p_i$ exhaust
$\Delta_4^3$. We write $i\to j$ when $a_{ij}\in S$, and
$i\leftrightarrow j$ when both $i\to j$ and $j\to i$ occur.

\begin{lemma}[Support graph]
\label{lem:app-support-graph}
For every $i\in[4]$,
\[
    |N_i|\in\{0,2,3\},
    \qquad
    p_i\in S\ \Longrightarrow\ |N_i|=3.
\]
\end{lemma}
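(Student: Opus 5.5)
Both assertions follow from the exchange axiom applied to a few well-chosen pairs in $S$, using the standing assumption $z_1,z_2,z_3,z_4\in S$. Fix $i$ and write $\{j,k,\ell\}=[4]\setminus\{i\}$.

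\emph{Step 1: $|N_i|\neq 1$.} Suppose $a_{ij}=2e_i+e_j\in S$. We compare $a_{ij}$ with $z_j=e_i+e_k+e_\ell$. The coordinate $i$ satisfies $(a_{ij})_i=2>1=(z_j)_i$, so the exchange axiom gives an index $m$ with $(a_{ij})_m<(z_j)_m$. Hence $m\in\{k,\ell\}$, and
\[
a_{ij}-e_i+e_m=e_i+e_j+e_m\in S,\qquad z_j+e_i-e_m=2e_i+e_{m'}\in S,
\]
where $m'$ is the element of $\{k,\ell\}$ different from $m$. Thus $a_{im'}\in S$ with $m'\neq j$. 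So whenever $N_i$ is nonempty it contains at least two elements, and $|N_i|\in\{0,2,3\}$. Only the second output of the exchange is needed; the first lies in the $z$'s, which are already in $S$.

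\emph{Step 2: $p_i\in S$ forces $|N_i|=3$.} Suppose $p_i=3e_i\in S$, and compare it with $z_m$ for $m\neq i$, say $z_j=e_i+e_k+e_\ell$. At coordinate $i$ we have $3>1$, so the exchange uses $k$ or $\ell$, and $z_j+e_i-e_k=2e_i+e_\ell$ or $z_j+e_i-e_\ell=2e_i+e_k$ lies in $S$. This shows only that $N_i\cap\{k,\ell\}\neq\varnothing$. Running the same comparison with $z_k$ and with $z_\ell$ shows that $N_i$ meets every two-element subset of $\{j,k,\ell\}$. A set that meets every pair of a three-element set misses at most one element, so $|N_i|\geq 2$.

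\emph{Step 3: excluding $|N_i|=2$ when $p_i\in S$.} This is the step I expect to be the main obstacle. Suppose $N_i=\{k,\ell\}$, so $a_{ij}\notin S$. I would first try $p_i$ against the other mixed points $a_{mn}$, which turned out not to force $a_{ij}$. The decisive comparison is instead $p_i$ against $a_{ji}=2e_j+e_i$, if that point lies in $S$. At coordinate $i$ the only index with $(p_i)_m<(a_{ji})_m$ is $m=j$, so the exchange is forced:
\[
p_i-e_i+e_j=2e_i+e_j=a_{ij}\in S,
\]
a contradiction.

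It therefore remains to show $a_{ji}\in S$, or else to produce a point with positive $j$-coordinate and $i$-coordinate $\leq 1$ whose exchange with $p_i$ is forced onto $j$. The candidates with $i$-coordinate $\le 1$ are $z_k$, $z_\ell$, $a_{ji}$, $a_{jk}$, $a_{j\ell}$ and $p_j$.
\begin{itemize}
\item For $z_k=e_i+e_j+e_\ell$ and $z_\ell$, the exchange has two admissible indices, so it is not forced.
\item For $a_{jk}=2e_j+e_k$ against $p_i$, the admissible indices are $j$ and $k$. Choosing $k$ gives $2e_i+e_k\in S$, which is already consistent with $N_i=\{k,\ell\}$, so this yields no contradiction either.
\end{itemize}

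The configuration $S=\{p_i,\,a_{ik},\,a_{i\ell},\,z_1,\dots,z_4,\dots\}$ with no point having $j$-coordinate $\geq 2$ must therefore be ruled out directly. My plan is to test $M$-convexity on the pair $p_i$ and $z_i=e_j+e_k+e_\ell$. At coordinate $i$ we have $3>0$, and every $m\in\{j,k,\ell\}$ is admissible, so $m=k$ or $m=\ell$ is possible, giving $p_i-e_i+e_k=a_{ik}\in S$ together with $z_i+e_i-e_k=e_i+e_j+e_\ell=z_k\in S$. These are consistent, so this pair does not decide the question.

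If no pair comparison excludes $N_i=\{k,\ell\}$, I would check $M$-convexity of the candidate set $\{p_i,a_{ik},a_{i\ell},z_1,\ldots,z_4\}$ directly, reducing by \cite[Theorem~6.4]{Murota} to pairs at $\ell^1$-distance $4$. If that set turns out to be $M$-convex, the claimed implication $p_i\in S\Rightarrow|N_i|=3$ would need additional hypotheses, and I would revisit the precise statement.
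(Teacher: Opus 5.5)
Your Step 1 is correct. You exchange at coordinate $i$ rather than at $j$ as the paper does, and either choice works.

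There is a genuine gap in Steps 2--3. It comes from using only one half of the symmetric exchange axiom. Compare $p_i=3e_i$ with $z_j=e_i+e_k+e_\ell$ at coordinate $i$, with admissible index $m\in\{k,\ell\}$. The axiom as stated in the paper requires \emph{both}
\[
p_i-e_i+e_m=2e_i+e_m=a_{im}\in S
\qquad\text{and}\qquad
z_j-e_m+e_i=2e_i+e_{m'}=a_{im'}\in S,
\]
where $\{m,m'\}=\{k,\ell\}$. So whichever admissible index is used, both $a_{ik}$ and $a_{i\ell}$ lie in $S$. The comparison therefore gives $\{k,\ell\}\subseteq N_i$, not merely $N_i\cap\{k,\ell\}\neq\varnothing$. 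Repeating it with $z_k$ in place of $z_j$ gives $a_{ij}\in S$ as well, hence $|N_i|=3$. This is exactly the paper's argument, and it makes your Step 3 unnecessary. In Step 1 you correctly observed that the first output of the exchange was one of the $z$'s and could be discarded. In Step 2 the first output is $a_{im}$, which is not a $z$, and discarding it is what lost the argument.

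The same oversight is why you dismissed the decisive comparison in Step 3. Assume $N_i=\{k,\ell\}$ and exchange $p_i$ with $z_k=e_i+e_j+e_\ell$ at coordinate $i$. The choice $m=j$ fails because $p_i-e_i+e_j=a_{ij}\notin S$. The choice $m=\ell$ fails because $z_k-e_\ell+e_i=2e_i+e_j=a_{ij}\notin S$. So your candidate set $\{p_i,a_{ik},a_{i\ell},z_1,\dots,z_4\}$ is not $M$-convex, and the statement needs no additional hypotheses. As written, however, your proposal leaves the implication $p_i\in S\Rightarrow|N_i|=3$ unproved.
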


\begin{proof}
Suppose that $a_{ij}\in S$, and let $k,\ell$ be the remaining indices.
Compare $a_{ij}$ with $z_j=e_i+e_k+e_\ell$. Symmetric exchange at the
$j$-coordinate forces $a_{ik}\in S$ or $a_{i\ell}\in S$. Hence
$|N_i|\neq 1$.

Now suppose that $p_i\in S$. Fix $j\neq i$, and let $k,\ell$ be the
remaining indices. Symmetric exchange between $p_i$ and $z_j$ at the
$i$-coordinate forces both $a_{ik}$ and $a_{i\ell}$ to lie in $S$.
Varying $j$ gives $|N_i|=3$.
\end{proof}

Let $q=(q_\alpha)_{\alpha\in S}\in\mathbb R^S$. The vectors
$z_1,\ldots,z_4$ form a basis of $\mathbb R^4$, since the matrix with these
vectors as columns is $J-I$. Hence there is a unique linear function
$\ell(\alpha)=\langle b,\alpha\rangle$ such that
\[
    (q-\ell)(z_i)=0
    \qquad\text{for every }i\in[4].
\]
For such a normalized function $q$, define
\[
    s_{ij}:=-q(a_{ij})
    \quad\text{when }a_{ij}\in S,
    \qquad
    r_i:=-q(p_i)
    \quad\text{when }p_i\in S.
\]

\begin{lemma}[Local exchange criterion]
\label{lem:app-local-exchange}
Assume that $q(z_i)=0$ for every $i\in[4]$. If
\[
    q\in\Cone\bigl(\cG_4^{\mathrm{el}}(S)\bigr)^\circ,
\]
then, whenever the displayed coordinates are defined,
\begin{align}
    s_{ij}&\geq 0,
    \label{eq:app-A}\\
    s_{ij}&\leq s_{ik}+s_{ji},
    \label{eq:app-B}\\
    s_{ij}+s_{ik}&\leq r_i.
    \label{eq:app-C}
\end{align}
Here $i\neq j$ in \eqref{eq:app-A}, and $i,j,k$ are pairwise distinct in
\eqref{eq:app-B} and \eqref{eq:app-C}. Each inequality is imposed only
when all of its coefficient coordinates belong to $S$. If
$N_i=\{j,k\}$, then
\begin{equation}
\label{eq:app-D}
    s_{ij}=s_{ik}.
\end{equation}
Moreover, among normalized functions satisfying
\eqref{eq:app-A}--\eqref{eq:app-D}, $M$-concavity is equivalent to the
additional condition
\begin{equation}
\label{eq:app-E}
    \min_{j\in N_i}s_{ij}
    \text{ is attained at least twice whenever }|N_i|=3.
\end{equation}
\end{lemma}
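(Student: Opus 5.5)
The plan is to obtain \eqref{eq:app-A}--\eqref{eq:app-D} by testing $q$ against one well-chosen lifted elementary generator each, and then to prove the $M$-concavity equivalence by enumerating local exchanges.

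\emph{Slice contents.} Since $z_1,\dots,z_4\in S$, every slice $S_i$ contains the three off-diagonal points $z_l-e_i=e_j+e_k$ with $\{j,k,l\}=[4]\setminus\{i\}$. It contains $e_i+e_j$ exactly when $a_{ij}\in S$, the diagonal point $2e_j$ ($j\ne i$) exactly when $a_{ji}\in S$, and $2e_i$ exactly when $p_i\in S$. For any lifted generator $h=\widetilde g^{\,e_m}$, the polar condition gives $\langle h,q\rangle\le 0$. Since $q$ vanishes on the $z$'s, only the $a$- and $p$-coordinates contribute. Throughout, $i,j,k,l$ denote the four distinct indices.

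\emph{Deriving \eqref{eq:app-A}--\eqref{eq:app-D}.}
\begin{itemize}
\item For \eqref{eq:app-A}: if $a_{ij}\in S$, then $S_j$ contains $2e_i$, $e_k+e_l$, $e_i+e_k$ and $e_i+e_l$. The triangular direction \eqref{eq:app-E-triangular} in $S_j$ with diagonal $2e_i$ lifts to $a_{ij}+z_i-z_l-z_k$. Pairing with $q$ gives $q(a_{ij})\le0$, that is, $s_{ij}\ge0$.
\item For \eqref{eq:app-B}: in $S_i$, use the triangular direction with diagonal $2e_j$ (available since $a_{ji}\in S$), namely $\varepsilon_{2e_j}+\varepsilon_{e_i+e_k}-\varepsilon_{e_i+e_j}-\varepsilon_{e_j+e_k}$. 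It lifts to $a_{ji}+a_{ik}-a_{ij}-z_l$, which yields $s_{ij}\le s_{ik}+s_{ji}$.
\item For \eqref{eq:app-C}: the triangular direction in $S_i$ with diagonal $2e_i$ lifts to $p_i+z_l-a_{ij}-a_{ik}$, which yields $s_{ij}+s_{ik}\le r_i$.
\item For \eqref{eq:app-D}: if $N_i=\{j,k\}$, then $e_i+e_l\notin S_i$. The four points $e_i+e_j$, $e_k+e_l$, $e_i+e_k$, $e_j+e_l$ lie in $S_i$, so both signs of \eqref{eq:app-E-fourpoint} are available. They lift to $\pm(a_{ij}+z_j-a_{ik}-z_k)$, which forces $s_{ij}=s_{ik}$.
\end{itemize}

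\emph{The $M$-concavity equivalence.} By \cite[Theorem~6.4]{Murota} it suffices to check exchanges with $\|\alpha-\beta\|_1=4$. Note that the linear part $\ell$ is irrelevant, since adding a linear function preserves $M$-concavity. I would enumerate the pairs in $S$ up to symmetry by type: $(p_i,\cdot)$, $(a_{ij},a_{kl})$, $(a_{ij},a_{ki})$, $(a_{ij},z_m)$, and $(z,z)$. For each pair, the required exchange inequality should reduce to one of \eqref{eq:app-A}--\eqref{eq:app-D}, or to a trivial identity. For instance, the pair $(a_{ij},z_i)$ reduces to $s_{ij}\ge0$; the pairs involving $p_i$ reduce to \eqref{eq:app-C}; and pairs of the form $(a_{ij},a_{ki})$ reduce to \eqref{eq:app-B}, using \eqref{eq:app-D} when some $N$ has size $2$.

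The one genuinely new case is the pair $\alpha=a_{ij}$, $\beta=z_j$, for which $\alpha-\beta=e_i+e_j-e_k-e_l$. Exchanging at coordinate $i$ or $j$ with $k$ or $l$ produces $a_{ik}+z_k$ or $a_{il}+z_l$. So $M$-concavity at this pair says exactly that $s_{ij}\ge\min(s_{ik},s_{il})$ over those of $a_{ik},a_{il}$ lying in $S$. When $|N_i|=3$, imposing this for all three choices of $j$ is precisely \eqref{eq:app-E}. When $|N_i|=2$ it is already implied by \eqref{eq:app-D}. When $|N_i|=0$ the pair does not occur. This gives both necessity of \eqref{eq:app-E} and its sufficiency.

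\emph{Main obstacle.} I expect the hardest step to be the complete case enumeration for sufficiency. One must verify that the exchange index demanded by the axiom always lands on a point of $S$: this should follow from Lemma~\ref{lem:app-support-graph} together with $M$-convexity. One must also verify that no other norm-$4$ pair imposes a condition beyond \eqref{eq:app-A}--\eqref{eq:app-E}. I would organize the enumeration by the multiset $\{\alpha,\beta\}$ of point types to keep it finite and symmetric.
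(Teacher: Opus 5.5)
Your proposal follows the paper's proof essentially step for step. The same lifted triangular and four-point directions give \eqref{eq:app-A}--\eqref{eq:app-D}. The same enumeration of exchanges with $\|\alpha-\beta\|_1=4$ (which the paper organizes by the slice $\alpha\wedge\beta=e_m$) then isolates the pair $(a_{ij},z_j)$, the disjoint off-diagonal exchange in $S_i$, as the only source of \eqref{eq:app-E}.

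A few corrections to your predicted case list, for when you write out the enumeration:
\begin{enumerate}[label=\textup{(\alph*)}]
    \item The pair $(p_i,a_{ji})$, i.e.\ the exchange $2e_i$ versus $2e_j$ in $S_i$, does not reduce to \eqref{eq:app-C} alone. It imposes $2s_{ij}\le r_i+s_{ji}$. You obtain this by adding $s_{ij}+s_{ik}\le r_i$ from \eqref{eq:app-C} to the instance $s_{ij}\le s_{ik}+s_{ji}$ of \eqref{eq:app-B}, for some $k\in N_i\setminus\{j\}$. Such a $k$ exists because $p_i\in S$ forces $|N_i|=3$ by Lemma~\ref{lem:app-support-graph}. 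This is the one place where the paper uses that lemma non-trivially in this proof.
    \item The pair $(a_{ij},a_{kl})$ with four distinct indices is not a local pair, since its $\ell^1$-distance is $6$.
    \item The pair $(a_{ij},a_{kj})$ is a local pair, which your list omits. It gives $s_{ij}+s_{kj}\ge 0$, a consequence of \eqref{eq:app-A}.
    \item For $(a_{ij},a_{ki})$, \eqref{eq:app-D} is not needed; this pair gives an instance of \eqref{eq:app-B} directly.
\end{enumerate}
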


\begin{proof}
Fix $m\in[4]$ and write
\[
    Q_m(u,v):=q(e_m+e_u+e_v).
\]
For distinct $j,k\neq m$,
\[
    Q_m(m,m)=-r_m,
    \qquad
    Q_m(m,j)=-s_{mj},
    \qquad
    Q_m(j,j)=-s_{jm},
    \qquad
    Q_m(j,k)=0.
\]
Indeed, if $\ell$ is the remaining index, then
$e_m+e_j+e_k=z_\ell$.

Let $j,k,\ell$ be the three indices distinct from $m$. Pairing $q$ with
the elementary triangular directions determined, respectively, by
\[
    (2e_j,e_k+e_\ell),
    \qquad
    (2e_j,e_m+e_k),
    \qquad
    (2e_m,e_j+e_k)
\]
gives \eqref{eq:app-A}, \eqref{eq:app-B}, and
\eqref{eq:app-C}. Each pairing is used only when all of the corresponding
coefficient coordinates are present. If $N_m=\{j,k\}$, then
$e_m+e_\ell\notin S_m$, whereas the four points defining the corresponding
rectangle belong to $S_m$. The degenerate four-point direction
\eqref{eq:app-E-fourpoint}, included with both signs, therefore gives
$s_{mj}=s_{mk}$. This proves \eqref{eq:app-D}.

It remains to identify the local $M$-concavity conditions. Every local
exchange in $\Delta_4^3$ lies in a quadratic slice. Indeed, if
$\alpha,\beta\in\Delta_4^3$ and
$\|\alpha-\beta\|_1=4$, then
\[
    |\alpha\wedge\beta|
    =\frac{|\alpha|+|\beta|-\|\alpha-\beta\|_1}{2}
    =1,
\]
so $\alpha\wedge\beta=e_m$ for some $m$. The
diagonal--off-diagonal exchanges give
\eqref{eq:app-A}--\eqref{eq:app-C}. If $j,k\neq m$, the
diagonal--diagonal exchange between $2e_j$ and $2e_k$ gives
\[
    -s_{jm}-s_{km}\leq 0,
\]
which follows from \eqref{eq:app-A}. The only remaining
diagonal--diagonal inequality is
\[
    2s_{mj}\leq r_m+s_{jm},
\]
coming from $2e_m$ and $2e_j$. If this pair occurs, then $p_m\in S$, so
$|N_m|=3$ by Lemma~\ref{lem:app-support-graph}. Choose
$k\in N_m\setminus\{j\}$. Adding
\[
    r_m\geq s_{mj}+s_{mk}
    \qquad\text{and}\qquad
    s_{mj}\leq s_{mk}+s_{jm}
\]
proves the required inequality.

Only exchanges between disjoint off-diagonal pairs remain. If $|N_m|=2$,
the unique feasible branch gives \eqref{eq:app-D}. If $|N_m|=3$, the
three opposite pairings have values
$-s_{mj},-s_{mk},-s_{m\ell}$, so the exchange condition is precisely
\eqref{eq:app-E}. This exhausts all local exchanges. Since it is enough to
check local exchanges for $M$-concavity, the last assertion follows.
\end{proof}

The preceding lemma isolates the only branching obstruction: a function in
$\Cone(\cG_4^{\mathrm{el}}(S))^\circ$ need not itself satisfy
\eqref{eq:app-E}. The next lemma removes this obstruction. Its atoms have
four distinct roles: star atoms remove the common row minima, two-cycle atoms
remove symmetric residuals on mutual pairs, one-edge atoms remove the
remaining nonsymmetric residuals, and pure $r_i$ atoms remove the final
slack.

\begin{lemma}[Star decomposition]
\label{lem:app-star-decomposition}
Let $q:S\to\mathbb R$ be normalized by $q(z_i)=0$ for every $i$, and suppose
that its coordinates satisfy
\eqref{eq:app-A}--\eqref{eq:app-D}. Then there are finitely many
$M$-concave functions $q^{(\rho)}:S\to\mathbb R$ and coefficients
$\lambda_\rho\geq 0$ such that
\[
    q=\sum_\rho\lambda_\rho q^{(\rho)}.
\]
\end{lemma}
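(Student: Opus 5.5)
The plan is a greedy peeling argument. I will subtract explicit $M$-concave atoms from $q$ in a fixed order until nothing is left, and keep every coefficient nonnegative along the way. Every atom will be a normalized function, so it vanishes at $z_1,\dots,z_4$, and it is determined by its values $s_{ij}$ and $r_i$. By Lemma~\ref{lem:app-local-exchange}, such an atom is $M$-concave exactly when it satisfies \eqref{eq:app-A}--\eqref{eq:app-E}, with each inequality imposed only when its coordinates lie in $S$. The support graph is constrained by Lemma~\ref{lem:app-support-graph}: every out-degree $|N_i|$ is $0$, $2$ or $3$, and $p_i\in S$ forces $|N_i|=3$. So each atom can be checked by a short case analysis on this graph with $n=4$.

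\textbf{Step 1: star atoms.} For each $i$ with $N_i\neq\varnothing$, define $\sigma_i$ by $s_{ij}=1$ for all $j\in N_i$, by $r_i=2$ if $p_i\in S$, and by $0$ elsewhere. Checking the conditions:
\begin{itemize}
\item \eqref{eq:app-A} and \eqref{eq:app-D} hold trivially.
\item \eqref{eq:app-C} holds since $1+1\le 2$.
\item \eqref{eq:app-E} holds because row $i$ is constant and every other row is zero.
\item For \eqref{eq:app-B} in row $i$, the imposed instances read $1\le 1+s_{ji}$. In another row $m$, they read $0\le 0+s_{im}$.
\end{itemize}
Put $m_i:=\min_{j\in N_i}s_{ij}\ge0$ and subtract $\sum_i m_i\sigma_i$. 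The residual $s'_{ij}=s_{ij}-m_i$ is nonnegative. If $|N_i|=2$, it vanishes identically on row $i$ by \eqref{eq:app-D}. If $|N_i|=3$, it has a zero in row $i$. Applying \eqref{eq:app-B} to the original $q$ with $k$ a minimizer of row $i$ gives the key bound
\[
  s'_{ij}\le s_{ji}\qquad\text{whenever } j\to i.
\]
Finally, $r'_i=r_i-2m_i\ge s'_{ij}+s'_{ik}\ge 0$ by \eqref{eq:app-C}.

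\textbf{Step 2: two-cycle and one-edge atoms.} Now only rows with $|N_i|=3$ carry residuals. For a mutual pair $i\leftrightarrow j$ with $|N_i|=|N_j|=3$, the two-cycle atom $\tau_{ij}$ is defined by $s_{ij}=s_{ji}=1$, by $r_i=1$ or $r_j=1$ wherever $p_i$ or $p_j$ lies in $S$, and by $0$ elsewhere.
\begin{itemize}
\item \eqref{eq:app-B} and \eqref{eq:app-C} hold by direct inspection.
\item \eqref{eq:app-E} holds because each affected row has two zeros among its three entries.
\end{itemize}
A one-edge atom $\eta_{ij}$ is defined by $s_{ij}=1$ (and $r_i=1$ if $p_i\in S$). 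It is $M$-concave whenever every imposed instance of \eqref{eq:app-B} with left side $s_{ij}$ has $s_{ji}$ present, or is vacuous.

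The peeling in this step goes as follows. First subtract $\min(s'_{ij},s'_{ji})\,\tau_{ij}$ on mutual pairs. What remains is supported on one direction of each pair, or on edges $i\to j$ with $j\not\to i$. Remove these with one-edge atoms. The main obstacle is exactly here. A one-edge atom on an edge $i\to j$ with $j\to i$ is $M$-concave only if $s_{ji}$ carries enough weight. If the bound $s'_{ij}\le s_{ji}$ shows that the reverse weight was used up by the star at $j$ rather than left in the residual, I would instead pair the leftover $s'_{ij}$ with part of $m_j\sigma_j$. That is, I would replace a portion of the star at $j$ by a modified atom with $s_{ji}=s_{jk}=s_{j\ell}=1$ and the extra entry $s_{ij}=1$, and then check its $M$-concavity directly. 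My first attempt would be to order the peeling so that stars are taken with $m_j$ reduced just enough to leave $s'_{ji}\ge s'_{ij}$. I would then verify, by enumerating the finitely many isomorphism types of support graphs on four vertices with out-degrees in $\{0,2,3\}$, that \eqref{eq:app-B}--\eqref{eq:app-E} persist for the residual.

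\textbf{Step 3: pure $r_i$ atoms.} Whatever slack remains in $r'_i$ is removed by the atoms $\rho_i$ with $r_i=1$ and all other values $0$. These are $M$-concave, since $r_i$ appears only on the large side of \eqref{eq:app-C} and of the diagonal exchange in the proof of Lemma~\ref{lem:app-local-exchange}.

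Summing the atoms from Steps 1--3 with their nonnegative coefficients recovers $q$, which gives the required decomposition.
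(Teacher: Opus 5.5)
Your architecture matches the paper's: stars remove the row minima, then two-cycle atoms, one-edge atoms on non-mutual edges, and pure $r_i$ atoms. Your bound $s'_{ij}\le s_{ji}=m_j+s'_{ji}$ is exactly the paper's capacity estimate $y_{ij}\le m_j$ for the excess $y_{ij}:=(s'_{ij}-s'_{ji})_+$ on a mutual pair $i\leftrightarrow j$. The gap is that Step~2 never actually disposes of these excesses, and both remedies you sketch fail.

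\begin{itemize}
\item The capacity bound holds for each $i$ separately, not for $\sum_i y_{ij}$. So pieces of $m_j\sigma_j$ carrying a \emph{single} extra leaf cannot serve several excess edges into the same centre $j$.
\item Trading star weight for two-cycles violates the $r$-budget. A star covers all of row $j$ at cost $r_j=2$ per unit, whereas each two-cycle costs $r_j\ge1$ per unit of each edge. But \eqref{eq:app-C} only gives $r_j\ge 2m_j+\sum_k s'_{jk}$.
\end{itemize}

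Concretely, take full support with $s_{12}=s_{13}=s_{14}=s_{21}=s_{31}=s_{41}=1$, all other $s_{ij}=0$, $r_1=2$, and $r_2=r_3=r_4=1$. This satisfies \eqref{eq:app-A}--\eqref{eq:app-D}, and is in fact $M$-concave.

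\begin{itemize}
\item After subtracting $\sigma_1$, the residual is $s'_{21}=s'_{31}=s'_{41}=1$ with row~$1$ empty, and it has no decomposition at all. Atoms have nonnegative $s$-coordinates, so every atom has $s_{12}=s_{23}=0$. Then \eqref{eq:app-B} forces $s_{21}=0$.
\item More generally, matching the six $s$-coordinates with full stars, single-leaf stars and two-cycles forces $C_{12},C_{13},C_{14}$ to carry everything. That gives $r_1\ge 3>2$.
\end{itemize}

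Since the obstruction is numerical rather than combinatorial, enumerating support graphs will not repair it.

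The missing idea is to allow star atoms at $j$ with an arbitrary leaf set $J\subseteq I_j:=\{i: i\leftrightarrow j,\ y_{ij}>0\}$. Such an atom has
\begin{itemize}
\item $s_{jk}=1$ for $k\in N_j$ and $s_{ij}=1$ for $i\in J$;
\item $r_j=2$, and $r_i=1$ for $i\in J$, whenever the relevant $p$'s lie in $S$.
\end{itemize}
Weight it by $m_j\pi_{j,J}$, where $\pi_{j,\cdot}$ is any probability distribution on subsets of $I_j$ with marginals $\theta_{ij}=y_{ij}/m_j\in(0,1]$. The paper uses the product measure; nested sets obtained by sorting the $\theta_{ij}$ would work equally well.

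These atoms are $M$-concave. The centre row is constant, each leaf row is $(1,0,0)$, and each leaf instance of \eqref{eq:app-B} is saved by the centre edge $s_{ji}=1$. Together they put $m_j$ on every edge leaving $j$ and exactly $y_{ij}$ on each excess edge $i\to j$. What remains on mutual pairs is then symmetric and is removed by two-cycles. In the example above this returns the single atom $H_{1,\{2,3,4\}}=q$.

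You should also carry out the $r$-accounting explicitly. All atoms together contribute exactly $2m_i+\sum_j s'_{ij}$ to $r_i$. Condition \eqref{eq:app-C} dominates this because every residual row with $|N_i|=3$ contains a zero.
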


\begin{proof}
For each $i$, define
\[
    m_i:=
    \begin{cases}
        0, & N_i=\varnothing,\\
        s_{ij}\;(=s_{ik}), & N_i=\{j,k\},\\
        \min_{j\in N_i}s_{ij}, & |N_i|=3,
    \end{cases}
    \qquad
    u_{ij}:=s_{ij}-m_i.
\]
Then $u_{ij}\geq 0$, and
\begin{equation}
\label{eq:app-residual-full}
    u_{ij}>0
    \quad\Longrightarrow\quad
    |N_i|=3.
\end{equation}
Indeed, if $|N_i|=2$, the two entries in row $i$ are equal by
\eqref{eq:app-D}.

For a mutual pair $i\leftrightarrow j$, set
\[
    e_{ij}:=\min\{u_{ij},u_{ji}\}=e_{ji},
    \qquad
    y_{ij}:=(u_{ij}-u_{ji})_+.
\]
For a non-mutual ordered pair, set $y_{ij}:=0$. The key estimate is
\begin{equation}
\label{eq:app-capacity}
    y_{ij}\leq m_j.
\end{equation}
Indeed, if $y_{ij}>0$, then $u_{ij}>0$, so $|N_i|=3$ by
\eqref{eq:app-residual-full}. Since $j$ is not a minimizer in row $i$,
there is some $k\neq i,j$ such that $u_{ik}=0$. Using
\eqref{eq:app-B}, we obtain
\[
    m_i+u_{ij}=s_{ij}
    \leq s_{ik}+s_{ji}
    =m_i+m_j+u_{ji},
\]
which proves \eqref{eq:app-capacity}.

We now decompose the $s$-coordinates. Terms with zero coefficient are omitted
throughout. Each atom below is a function on $S$ with $q(z_i)=0$; it is
specified by listing its nonzero $s$- and $r$-coordinates, and all unspecified
coordinates are zero.

For fixed $j$, let
\[
    I_j:=\{i\neq j:i\leftrightarrow j,\ y_{ij}>0\}.
\]
If $m_j=0$, then $I_j=\varnothing$ by
\eqref{eq:app-capacity}, and there is no contribution centered at $j$.
Assume that $m_j>0$, and put
\[
    \theta_{ij}:=\frac{y_{ij}}{m_j}
    \qquad (i\in I_j).
\]
For each $J\subseteq I_j$, set
\[
    \pi_{j,J}
    :=
    \prod_{i\in J}\theta_{ij}
    \prod_{i\in I_j\setminus J}(1-\theta_{ij}).
\]
Then
\[
    \sum_{J\subseteq I_j}\pi_{j,J}=1,
    \qquad
    \sum_{\substack{J\subseteq I_j\\ i\in J}}\pi_{j,J}
    =\theta_{ij}
    \quad (i\in I_j).
\]

Define the star atom $H_{j,J}$ by
\[
    s_{jk}=1\quad(k\in N_j),
    \qquad
    s_{ij}=1\quad(i\in J),
\]
together with
\[
    r_j=2\quad\text{if }p_j\in S,
    \qquad
    r_i=1\quad\text{if }i\in J\text{ and }p_i\in S.
\]
The sum
\[
    \sum_{J\subseteq I_j}m_j\pi_{j,J}H_{j,J}
\]
contributes $m_j$ to every outgoing edge from $j$ and $y_{ij}$ to every
asymmetric incoming edge $i\to j$.

After subtracting all star atoms, a mutual pair
$i\leftrightarrow j$ has the same residual $e_{ij}$ in both directions.
For each unordered mutual pair with $e_{ij}>0$, include the term
$e_{ij}C_{ij}$, where the two-cycle atom $C_{ij}$ is defined by
\[
    s_{ij}=s_{ji}=1,
    \qquad
    r_i=r_j=1
    \quad\text{whenever the corresponding }p_i,p_j\text{ belong to }S.
\]
If $i\to j$ is non-mutual and $u_{ij}>0$, include the term
$u_{ij}E_{ij}$, where the one-edge atom $E_{ij}$ is defined by
\[
    s_{ij}=1,
    \qquad
    r_i=1\quad\text{if }p_i\in S.
\]
At this point, all $s$-coordinates have been matched.

Suppose that $p_i\in S$. Then $|N_i|=3$ and
$\min_{j\in N_i}u_{ij}=0$. Hence
\[
    \max_{\substack{j,k\in N_i\\j\neq k}}(s_{ij}+s_{ik})
    =2m_i+\sum_{j\in N_i}u_{ij}.
\]
By \eqref{eq:app-C},
\[
    r_i\geq 2m_i+\sum_{j\in N_i}u_{ij}.
\]
The atoms already chosen contribute exactly the right-hand side to the
$r_i$-coordinate. Thus the remaining slack
\[
    \delta_i
    :=r_i-2m_i-\sum_{j\in N_i}u_{ij}
    \geq 0
\]
is accounted for by $\delta_i$ times the pure $r_i$ atom, whose only
nonzero coordinate is $r_i=1$. Since the points $z_i$, $a_{ij}$, and $p_i$
exhaust $S$, this gives an exact finite nonnegative decomposition of $q$.

It remains to check that every atom occurring with positive coefficient is
$M$-concave. If $C_{ij}$ occurs, then $e_{ij}>0$, and
\eqref{eq:app-residual-full} gives $|N_i|=|N_j|=3$. If $E_{ij}$ occurs,
then $u_{ij}>0$, so $|N_i|=3$. A leaf row of a star also has three entries,
since $i\in I_j$ implies $u_{ij}>0$. By
Lemma~\ref{lem:app-local-exchange}, it is enough to verify
\eqref{eq:app-A}--\eqref{eq:app-E}. The nonzero rows of these atoms are
of the forms
\[
    (1,1),
    \qquad
    (1,1,1),
    \qquad
    (1,0,0).
\]
Thus \eqref{eq:app-A}, \eqref{eq:app-D}, and
\eqref{eq:app-E} are immediate. The chosen $r$-coordinates give
\eqref{eq:app-C}. For \eqref{eq:app-B}, whenever the left-hand side is
$1$, either another center edge or the reverse edge contributes $1$ on the
right-hand side. For a one-edge atom, the reverse edge is absent, so no such
support-defined instance of \eqref{eq:app-B} occurs. Pure $r_i$ atoms also
satisfy all five conditions. Therefore every selected atom is $M$-concave.
\end{proof}

\subsection{Proof of the quaternary cubic theorem}

\begin{proof}[Proof of Theorem~\ref{thm:quaternary-cubic-local-global}]
The second inclusion in \eqref{eq:app-cone-chain} gives
\[
    \Cone\bigl(\cG_4(S)\bigr)\subseteq\BR(\cL_S^+).
\]
Let $w=(w_\alpha)_{\alpha\in S}\in\BR(\cL_S^+)$. We first record the
balance relation
\begin{equation}
\label{eq:app-w-balanced}
    \sum_{\alpha\in S}w_\alpha\alpha=0.
\end{equation}
Indeed, the choice $q\equiv 0$ in
\eqref{eq:mconcave-degeneration} shows that $\cL_S^+$ is nonempty. For
$a\in\mathbb R^4$ and $t\in\mathbb R$, the change of variables
$x_i\mapsto e^{ta_i}x_i$ preserves $\cL_S^+$ and multiplies $R_w$ by
\[
    \exp\left(
        t\left\langle
            a,\sum_{\alpha\in S}w_\alpha\alpha
        \right\rangle
    \right).
\]
Boundedness for all $t\in\mathbb R$ gives
\eqref{eq:app-w-balanced}.

The set $\cG_4^{\mathrm{el}}(S)$ is finite. Hence its conic hull is closed,
and strict separation applies.

Suppose first that no slice is branching. If
\[
    w\notin\Cone\bigl(\cG_4^{\mathrm{el}}(S)\bigr),
\]
then there is a function $q\in\mathbb R^S$ such that
\[
    q\in\Cone\bigl(\cG_4^{\mathrm{el}}(S)\bigr)^\circ,
    \qquad
    \langle w,q\rangle>0.
\]
Every local exchange lies in a quadratic slice. In a quadratic slice, the
only exchange with two distinct branches is the exchange between two
disjoint off-diagonal pairs. If both branches were feasible, all six
off-diagonal points would be present and the slice would be branching.
Thus every local exchange has a unique feasible branch, and its inequality is
represented by one of \eqref{eq:app-E-af},
\eqref{eq:app-E-triangular}, or \eqref{eq:app-E-fourpoint}. It follows from
the polar condition that $q$ is $M$-concave. The family
\eqref{eq:mconcave-degeneration} lies in $\cL_S^+$, whereas
\[
    R_w(f_t)=e^{t\langle w,q\rangle}\longrightarrow\infty.
\]
This contradicts $w\in\BR(\cL_S^+)$. Therefore
\[
    w\in\Cone\bigl(\cG_4^{\mathrm{el}}(S)\bigr)
    \subseteq\Cone\bigl(\cG_4(S)\bigr).
\]

Now suppose that some slice $S_i$ is branching. By
Lemma~\ref{lem:app-support-reduction}, either $S=e_i+T$ or
$z_1,z_2,z_3,z_4\in S$. In the first case,
Lemma~\ref{lem:app-translated-quadratic} gives
\[
    \BR(\cL_S^+)=\BR(\cL_T^+).
\]
Since $S_i=T$, the natural identification of coefficient coordinates
realizes $w$ as the extension of an element of $\BR(\cL_{S_i}^+)$. Hence
\[
    w\in\cG_4(S)\subseteq\Cone\bigl(\cG_4(S)\bigr).
\]

It remains to treat the case $z_1,z_2,z_3,z_4\in S$. Suppose that
\[
    w\notin\Cone\bigl(\cG_4^{\mathrm{el}}(S)\bigr).
\]
Choose
\[
    q\in\Cone\bigl(\cG_4^{\mathrm{el}}(S)\bigr)^\circ
    \qquad\text{with}\qquad
    \langle w,q\rangle>0.
\]
Let $\ell(\alpha)=\langle b,\alpha\rangle$ be the unique linear function
such that $q':=q-\ell$ satisfies $q'(z_i)=0$ for every $i$. By
\eqref{eq:app-generator-balanced},
$\langle h,\ell\rangle=0$ for every
$h\in\cG_4^{\mathrm{el}}(S)$, and therefore
\[
    q'\in\Cone\bigl(\cG_4^{\mathrm{el}}(S)\bigr)^\circ.
\]
By \eqref{eq:app-w-balanced}, $\langle w,\ell\rangle=0$, so
$\langle w,q'\rangle>0$.

Lemma~\ref{lem:app-local-exchange} gives
\eqref{eq:app-A}--\eqref{eq:app-D} for $q'$. By
Lemma~\ref{lem:app-star-decomposition},
\[
    q'=\sum_\rho\lambda_\rho q^{(\rho)},
    \qquad
    \lambda_\rho\geq 0,
\]
where every $q^{(\rho)}$ is $M$-concave. Since
$\langle w,q'\rangle>0$, some index $\rho$ with $\lambda_\rho>0$ satisfies
\[
    \langle w,q^{(\rho)}\rangle>0.
\]
Applying \eqref{eq:mconcave-degeneration} to this $q^{(\rho)}$ produces a
family in $\cL_S^+$ along which $R_w$ tends to infinity, again a
contradiction. Thus
\[
    w\in\Cone\bigl(\cG_4^{\mathrm{el}}(S)\bigr)
    \subseteq\Cone\bigl(\cG_4(S)\bigr).
\]

We have proved
\[
    \BR(\cL_S^+)\subseteq\Cone\bigl(\cG_4(S)\bigr).
\]
Together with \eqref{eq:app-cone-chain}, this proves the theorem.
\end{proof}

\begin{remark}[Where the cubic hypothesis enters]
\label{rem:app-why-cubic}
The degree-three hypothesis enters in the branching analysis. Since
\[
    \Delta_4^{3-2}=\{e_1,e_2,e_3,e_4\},
\]
every quadratic slice is indexed by a single coordinate.
Lemma~\ref{lem:app-support-reduction} then uses $|\alpha|=3$ to reduce a
branching support either to a translated quadratic support or to the case in
which all four points $z_1,\ldots,z_4$ are present. In the latter case, every
point of $\Delta_4^3$ is one of the $z_i$, $a_{ij}$, or $p_i$, which makes
the support-graph decomposition possible.
\end{remark}

\section{The two minimal obstructions}
\label{sec:minimal-obstructions}

The failure region has exactly two minimal pairs, $(4,4)$ and $(5,3)$.
In this section we construct a full-support bounded ratio outside the
quadratic local cone at each pair. The two proofs share the same pattern:
quadratic matrix inequalities prove boundedness, and a construction of separating vector
proves nonlocality.

We first prove a useful lemma. For now, fix arbitrary $n$ and $d$. We define a \textit{cut} of $[n]$ as $A \mid A^c$ with $A \subset [n]$, where $A^c$ denotes the complement set of $A$ in $[n]$. For $0\leq s\leq d$, let $\mu_d(s):=\min\{s,d-s\}$. Define $q^A \in \R^{\Delta_n^d}$ as
\[
    q^A(\alpha)
    :=
    \mu_d\bigl(\sum_{i \in A} \alpha_i\bigr).
\]

\begin{lemma}[Cut directions are polar]
\label{lem:cut-directions}
For every nontrivial cut $A\mid A^c$ of $[n]$ and every $d\geq 1$,
\[
    q^A\in\BR(\cL_{n,d}^+)^\circ.
\]
Equivalently,
\[
    \langle w,q^A\rangle\leq 0
    \qquad\text{for every }w\in\BR(\cL_{n,d}^+).
\]
\end{lemma}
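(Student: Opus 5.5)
The plan is to show that $q^A$ is an $M$-concave function on $\Delta_n^d$ and then use the degeneration family \eqref{eq:mconcave-degeneration}. The reduction is direct. Suppose $w\in\BR(\cL_{n,d}^+)$ and $\langle w,q^A\rangle>0$. For $t\ge0$ the polynomials
\[
f_t(\mathbf x)=\sum_{\alpha\in\Delta_n^d}e^{tq^A(\alpha)}\frac{\mathbf x^\alpha}{\alpha!}
\]
lie in $\cL_{n,d}^+$ by~\cite[Theorem~3.14]{BH}. Along this family $R_w(f_t)=e^{t\langle w,q^A\rangle}\to\infty$, which contradicts boundedness. Hence $\langle w,q^A\rangle\le0$ for every bounded ratio $w$, and this is exactly the claim $q^A\in\BR(\cL_{n,d}^+)^\circ$. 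All the work therefore goes into proving $M$-concavity.

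To prove $M$-concavity, write $q^A=\mu_d\circ\sigma_A$, where $\sigma_A(\alpha)=\sum_{i\in A}\alpha_i$ is linear and $\mu_d(s)=\min\{s,d-s\}$ is concave on $\{0,\dots,d\}$. One route is to quote the standard fact that a univariate concave function composed with a laminar (here, single-block) coordinate sum is $M^\natural$-concave, and hence $M$-concave on a constant-degree layer. Since this fact is not stated earlier in the paper, I would instead verify the exchange property directly. Take $\alpha,\beta\in\Delta_n^d$ with $\alpha_i>\beta_i$, and set $a=\sigma_A(\alpha)$, $b=\sigma_A(\beta)$. The goal is to find $j$ with $\alpha_j<\beta_j$ such that
\[
\mu_d(a)+\mu_d(b)\le\mu_d(a')+\mu_d(b'),
\]
where $a'=\sigma_A(\alpha-e_i+e_j)$ and $b'=\sigma_A(\beta+e_i-e_j)$. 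Because $a'+b'=a+b$, the pair $(a',b')$ is either $(a,b)$ or $(a\mp1,b\pm1)$.

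The case analysis splits on whether $i\in A$.
\begin{itemize}
\item Case $i\in A$, $a\le b$. Choose $j\in A$ with $\alpha_j<\beta_j$. Such a $j$ exists because $\sum_{A}(\beta-\alpha)=b-a\ge0$ while $\beta_i-\alpha_i<0$, so some other coordinate in $A$ has $\beta_j>\alpha_j$. Then $(a',b')=(a,b)$ and equality holds.
\item Case $i\in A$, $a>b$. Any admissible $j$ moves the pair to $(a-1,b+1)$ or leaves it unchanged. Moving the sums toward each other never decreases $\mu_d(a)+\mu_d(b)$: since $a-1\ge b$, concavity gives $g(a-1)+g(b+1)\ge g(a)+g(b)$ for any concave $g$.
\item Case $i\notin A$. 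This is symmetric, using $d-a$ and $d-b$, which are the sums over $A^c$, and the symmetry $\mu_d(s)=\mu_d(d-s)$.
\end{itemize}
An admissible $j$ always exists because $|\alpha|=|\beta|$.

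The main obstacle is the subcase in which a move would push the sums apart. This happens when $a\le b$ and the only $j$ with $\alpha_j<\beta_j$ lies outside $A$. I expect the counting argument in the first case to exclude this, and I would check that argument carefully together with its mirror image for $A^c$. The case $d=1$ is immediate, since $\mu_1\equiv0$ on $\{0,1\}$.
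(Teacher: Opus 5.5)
Your argument is correct, but it takes a different route from the paper. The paper never checks $M$-concavity of $q^A$. It fixes $d\ge2$ and verifies directly, via the Hessian characterization \cite[Theorem~2.25]{BH}, that $f_{A,t}$ with coefficients $e^{t\mu_d(\alpha(A))}$ is Lorentzian. With $s=\beta(A)$, each slice $H_\beta(f_{A,t})$ is constant on the blocks $A\times A$, $A\times A^c$ and $A^c\times A^c$, with values $u=e^{t\mu_d(s+2)}$, $v=e^{t\mu_d(s+1)}$ and $w=e^{t\mu_d(s)}$. Vectors with zero sum on each block lie in the kernel. The remaining $2\times2$ matrix has determinant $|A|\,|A^c|\,(uw-v^2)\le0$, by the discrete concavity $\mu_d(s)+\mu_d(s+2)\le2\mu_d(s+1)$.

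You instead prove that $q^A$ is $M$-concave by an exchange case analysis and then invoke \eqref{eq:mconcave-degeneration}. The paper's route is a short spectral computation that needs only the slice characterization already used to define the local cone. Yours is purely combinatorial and rests on the heavier \cite[Theorem~3.14]{BH}. In exchange, it shows the cut directions are genuine $M$-concave functions. That matches the separation mechanism of Section~\ref{sec:quaternary-cubics}, where polar directions are realized through $M$-concave degenerations. Both arguments use the same single input, the concavity of $\mu_d$.

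The worry in your final paragraph is already settled by your first case. If $i\in A$ and $a\le b$, then $\sum_{k\in A\setminus\{i\}}(\beta_k-\alpha_k)=(b-a)+(\alpha_i-\beta_i)>0$. So some $j\in A$ with $\alpha_j<\beta_j$ always exists, and no move pushing the sums apart is ever forced. The mirror case $i\notin A$, $a\ge b$ follows in the same way from $q^A=\mu_d\circ\sigma_{A^c}$. Since you verify the full exchange inequality rather than only local exchanges, the proof is complete as written.
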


\begin{proof}
When $d=1$, the direction $q^A$ is zero, so the assertion is immediate.
Assume that $d\geq 2$ and consider
\[
    f_{A,t}(\mathbf x)
    :=
    \sum_{\alpha\in\Delta_n^d}
    e^{t\mu_d(\alpha(A))}
    \frac{\mathbf x^\alpha}{\alpha!},
    \qquad t\geq 0.
\]
We verify directly that $f_{A,t}$ is Lorentzian. Fix
$\beta\in\Delta_n^{d-2}$ and put
\[
    s:=\beta(A),
    \qquad
    a:=|A|,
    \qquad
    b:=|A^c|.
\]
The Hessian $H_\beta(f_{A,t})$ is constant on each of its two diagonal
blocks and on its off-diagonal blocks, with respective values
\[
    u:=e^{t\mu_d(s+2)},
    \qquad
    v:=e^{t\mu_d(s+1)},
    \qquad
    w:=e^{t\mu_d(s)}.
\]
The subspace of vectors having coordinate sum zero within each block lies in
the kernel. On the orthonormal basis given by the normalized indicators of
$A$ and $A^c$, the remaining matrix is
\[
    \begin{pmatrix}
        au & \sqrt{ab}\,v\\
        \sqrt{ab}\,v & bw
    \end{pmatrix}.
\]
Its determinant is
\[
    ab(uw-v^2)\leq 0,
\]
because the discrete concavity of $\mu_d$ gives
\[
    \mu_d(s+2)+\mu_d(s)
    \leq
    2\mu_d(s+1).
\]
Hence this $2\times 2$ matrix, and therefore the full Hessian slice, has at
most one positive eigenvalue. Since the full support $\Delta_n^d$ is
$M$-convex, it follows that
\[
    f_{A,t}\in\cL_{n,d}^+.
\]
For $w'\in\BR(\cL_{n,d}^+)$,
\[
    R_{w'}(f_{A,t})
    =
    e^{t\langle w',q^A\rangle}.
\]
This expression must remain bounded as $t\to\infty$, which forces
$\langle w',q^A\rangle\leq 0$.
\end{proof}

\subsection{The four-variable quartic obstruction}
\label{sec:four-quartic}

We begin with the minimal obstruction at $(n,d)=(4,4)$. The bounded ratio
comes from triangular quadratic inequalities and one four-point inequality,
while a perturbed square-transportation direction separates it from the
quadratic local cone.

For $\beta\in\Delta_4^2$ and pairwise distinct $i,j,k\in[4]$, set
\[
    T_\beta^{ij\mid k}(f)
    :=
    \frac{
        c_{\beta+e_i+e_j}c_{\beta+2e_k}
    }{
        c_{\beta+e_i+e_k}c_{\beta+e_j+e_k}
    }
\]
and $g_\beta^{ij \mid k}$ as the corresponding logarithmic vector in the local cone. The second inequality in
Lemma~\ref{lem:app-quadratic-inequalities} gives
\[
    T_\beta^{ij\mid k}(f)\leq 2.
\]

Define
\begin{equation}
\label{eq:quartic-base-ratio}
    R_4(f)
    :=
    \frac{c_{1030}c_{3010}c_{0202}}
         {c_{2020}c_{1012}c_{1210}},
\end{equation}
and let $w_4$ denote its exponent vector. Place $1,2,3,4$ at the vertices
of a square in cyclic order and let
\[
    P_1:=\{1,4\}\mid\{2,3\},
    \qquad
    P_2:=\{1,2\}\mid\{3,4\}.
\]
For a cut $P=A\mid A^c$, write $q^P=q^A$. 
Put
\[
    \alpha_*:=(2,0,2,0)
\]
and define
\begin{equation}
\label{eq:quartic-square-cost}
    q_4^0(\alpha)
    :=
    \mu_4(\alpha_2+\alpha_3)
    +\mu_4(\alpha_3+\alpha_4)
    =q^{P_1}(\alpha)+q^{P_2}(\alpha).
\end{equation}
Finally, set
\begin{equation}
\label{eq:quartic-modified-q}
    q_4(\alpha)
    :=
    \begin{cases}
        q_4^0(\alpha_*)-2, & \alpha=\alpha_*,\\
        q_4^0(\alpha), & \alpha\neq\alpha_*.
    \end{cases}
\end{equation}

\begin{theorem}[Four-variable quartic obstruction]
\label{thm:four-variable-quartic}
For every $f\in\cL_{4,4}^+$,
\[
    R_4(f)\leq64.
\]
Moreover,
\[
    w_4
    \in
    \BR(\cL_{4,4}^+)
    \setminus
    \Cone\bigl(\cG_4(\Delta_4^4)\bigr).
\]

\end{theorem}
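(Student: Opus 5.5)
The plan has two independent parts: a boundedness estimate proving $R_4\le 64$, and a separating functional proving $w_4\notin\Cone(\cG_4(\Delta_4^4))$. The functional will be the perturbed square-transportation direction $q_4$ of \eqref{eq:quartic-modified-q}.

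\emph{Boundedness.} I will write $\log R_4$ as a sum of local logarithmic vectors. Concretely, the aim is to express $w_4$ as a sum of triangular vectors $g_\beta^{ij\mid k}$ plus one four-point vector, in such a way that the intermediate coefficients cancel. Each triangular vector comes from Lemma~\ref{lem:app-quadratic-inequalities} with bound $T_\beta^{ij\mid k}\le 2$. The four-point vector comes from the triangle inequality $\sqrt A\le\sqrt C+\sqrt E$, which gives a ratio bound of $4$ after dominating by the larger term.

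The natural slices are $\beta\in\{(1,0,1,0),(2,0,0,0),(0,0,2,0),(0,1,0,1)\}$, using intermediate coefficients such as $c_{1111},c_{2011},c_{1021},c_{2110}$. For example, the numerators $c_{3010},c_{1030}$ appear as $c_{\beta+2e_k}$ in triangular ratios on the slices $\beta=(1,0,1,0)$, $k=1$ and $k=3$. The numerator $c_{0202}$ appears as a diagonal entry on the slice $\beta=(0,1,0,1)$. I will search for a combination of at most four triangular factors and one four-point factor, which yields the constant $2^4\cdot 4=64$. This is a finite linear bookkeeping problem in the exponents, and I expect it to be routine once the right telescoping is found.

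\emph{Positivity of the pairing.} Evaluating $q_4^0$ on the six support points of $w_4$ gives
\[
q_4^0(1030)=q_4^0(3010)=q_4^0(1012)=q_4^0(1210)=2,\qquad q_4^0(0202)=q_4^0(2020)=4.
\]
Hence $\langle w_4,q_4^0\rangle=2+2+4-4-2-2=0$, consistent with Lemma~\ref{lem:cut-directions}. After lowering $q_4(\alpha_*)$ by $2$, we get $\langle w_4,q_4\rangle=2>0$.

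\emph{Polarity, the main obstacle.} It remains to show $q_4\in\Cone(\cG_4(\Delta_4^4))^\circ$. Equivalently, for every $\beta\in\Delta_4^2$, the restriction $\gamma\mapsto q_4(\beta+\gamma)$ must lie in $\BR(\cL_{\Delta_4^2}^+)^\circ$.

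For slices not containing $\alpha_*$, this follows from the proof of Lemma~\ref{lem:cut-directions}. There, each Hessian slice of $f_{A,t}$ is a Lorentzian quadratic with coefficients $e^{t q^A(\beta+\cdot)}$, so the slice restriction of $q^A$ pairs nonpositively with every slice bounded ratio. Summing over $P_1,P_2$ handles $q_4^0$.

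The only delicate slices are the three containing $\alpha_*$, namely $\beta=(2,0,0,0)$, $(0,0,2,0)$ and $(1,0,1,0)$. For these I will use the Huang--Huh--Soskin--Wang description~\cite{HHSW}: in four variables the bounded-ratio cone of $\cL_{\Delta_4^2}^+$ is generated by the triangular directions \eqref{eq:app-E-triangular} and the AF directions \eqref{eq:app-E-af}. I will then verify $\langle g,q_4|_\beta\rangle\le 0$ generator by generator. The point is that $\beta+\gamma=\alpha_*$ occurs only with positive sign in generators: as $2e_i$ or as the opposite edge $e_j+e_k$ in a triangular vector, and as a diagonal point in an AF vector. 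For each such generator I must check that $q_4^0|_\beta$ has slack at least $2$, so lowering $q_4(\alpha_*)$ by $2$ keeps the pairing nonpositive. Generators containing $\alpha_*$ with negative sign only become more negative.

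This is a finite check on $10$-point slices. The risk is that some triangular generator on $\beta=(1,0,1,0)$ has slack only $1$. If so, I would replace the perturbation $-2$ by $-1$, which still gives $\langle w_4,q_4\rangle=1>0$. Combining polarity with $\langle w_4,q_4\rangle>0$ shows $w_4\notin\Cone(\cG_4(\Delta_4^4))$, and together with the bound $R_4\le 64$ this proves the theorem.
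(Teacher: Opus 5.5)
Your value $\langle w_4,q_4\rangle=2$ is correct. So is deriving the polarity of $q_4^0$ slice by slice from the Lorentzian Hessian slices of $f_{A,t}$. The gap is in the perturbation step, where the sign is reversed. Since
\[
\langle g,q_4\rangle=\langle g,q_4^0\rangle-2\,g_{\alpha_*},
\]
a generator with $g_{\alpha_*}>0$ only has its pairing decreased and needs no slack. A generator with $g_{\alpha_*}<0$ has its pairing \emph{raised} by $2|g_{\alpha_*}|$. Your own computation for $w_4$ shows this: $w_4$ has coefficient $-1$ at $\alpha_*$, and its pairing rises from $0$ to $2$.

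Moreover, $\alpha_*$ does occur with negative sign. On the slice $\beta=(1,0,1,0)$, the point $\alpha_*=\beta+e_1+e_3$ is an adjacent edge in $g_{1010}^{12\mid3}$, $g_{1010}^{14\mid3}$, $g_{1010}^{23\mid1}$ and $g_{1010}^{34\mid1}$. It also has coefficient $-2$ in the AF vector $\varepsilon_{3010}+\varepsilon_{1030}-2\varepsilon_{2020}$. These are exactly the generators the paper checks, and the check is tight. For example,
\[
\langle g_{1010}^{12\mid3},q_4^0\rangle
=q_4^0(2110)+q_4^0(1030)-q_4^0(2020)-q_4^0(1120)
=3+2-4-3=-2 .
\]
So after lowering $q_4(\alpha_*)$ by $2$ the pairing is exactly $0$; this is the paper's inequality $q_{2110}+q_{1030}\le q_{2020}+q_{1120}$. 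The AF vector likewise has slack exactly $4$. As written, your verification skips the only nontrivial cases. Meanwhile the diagonal slices $(2,0,0,0)$ and $(0,0,2,0)$, which you flag as delicate, need no checking at all.

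The boundedness plan also misidentifies its mechanism. On the slice $\beta_0=(0,1,0,1)$, write the opposite-edge products as $A=c_{1201}c_{0112}$, $B=c_{1111}c_{0202}$ and $C=c_{1102}c_{0211}$. A single telescoping identity of the kind you seek does exist, for example
\[
R_4=T_{1010}^{24\mid3}\,T_{1010}^{43\mid1}\,T_{1100}^{24\mid3}\,T_{0011}^{24\mid1}\cdot\frac BA .
\]
By itself it bounds nothing. On full support, $B/A$ is not a bounded ratio: it equals $e^{2t}$ along the cut degeneration for $\{1,2\}\mid\{3,4\}$. More generally, no identity can express $w_4$ as a nonnegative combination of local bounded-ratio vectors, since then $w_4$ would lie in the local cone.

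The triangle inequality $\sqrt B\le\sqrt A+\sqrt C$ only yields $\min\{B/A,B/C\}\le 4$. So you need a second factorization with the same numerator $B$,
\[
R_4=T_{1010}^{24\mid3}\,T_{1010}^{23\mid1}\,T_{0110}^{24\mid1}\,T_{1001}^{24\mid3}\cdot\frac BC ,
\]
and for each $f$ you use whichever of the two has the smaller four-point factor. Your phrase ``dominating by the larger term'' points toward this. However, the case split depending on $f$ is the essential idea, not a bookkeeping detail: it is precisely what allows a ratio outside the local cone to be bounded.
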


\begin{proof}
Fix $\beta_0=(0,1,0,1)$ and write
$H_{\beta_0}(f)=(p_{ij})$. Set
\[
    A:=c_{1201}c_{0112},
    \qquad
    B:=c_{1111}c_{0202},
    \qquad
    C:=c_{1102}c_{0211}.
\]
The four-point part of
Lemma~\ref{lem:app-quadratic-inequalities} gives
\[
    \sqrt B\leq\sqrt A+\sqrt C.
\]
Consequently,
\begin{equation}
\label{eq:quartic-min-four}
    \min\left\{\frac BA,\frac BC\right\}\leq 4.
\end{equation}
Indeed, if both ratios were greater than $4$, then
$\sqrt A<\sqrt B/2$ and $\sqrt C<\sqrt B/2$, contradicting the preceding
triangle inequality.

For brevity, write $R_4=R_4(f)$. Direct cancellation gives
\begin{align}
    R_4
    &=
    T_{1010}^{24\mid 3}
    T_{1010}^{43\mid 1}
    T_{1100}^{24\mid 3}
    T_{0011}^{24\mid 1}
    \frac BA,
    \label{eq:quartic-factor-A}\\
    R_4
    &=
    T_{1010}^{24\mid 3}
    T_{1010}^{23\mid 1}
    T_{0110}^{24\mid 1}
    T_{1001}^{24\mid 3}
    \frac BC.
    \label{eq:quartic-factor-C}
\end{align}
Choose the factorization corresponding to the smaller ratio in
\eqref{eq:quartic-min-four}. Since each of its four triangular factors is
at most $2$, we obtain
\[
    R_4(f)\leq 2^4\cdot 4=64.
\]

It remains to prove that the bounded ratio is not generated by the
quadratic slices. The values of $q_4^0$ on the three numerator coordinates
of \eqref{eq:quartic-base-ratio} are
\[
    2,\qquad 2,\qquad 4,
\]
whereas its values on the three denominator coordinates are
\[
    4,\qquad 2,\qquad 2.
\]
Hence $\langle w_4,q_4^0\rangle=0$. Since $\alpha_*=(2,0,2,0)$ occurs in
the denominator of $R_4$, \eqref{eq:quartic-modified-q} gives
\begin{equation}
\label{eq:quartic-positive-pairing}
    \langle w_4,q_4\rangle=2>0.
\end{equation}

We need to show that $q_4 \in \Cone(\cG_4(\Delta_4^4))^\circ$. By Lemma~\ref{lem:cut-directions}, $q_4^0=q^{P_1}+q^{P_2}$ belongs to $\Cone(\cG_4(\Delta_4^4))^\circ$. We only need to analyze the local generators in which $\alpha_*$ occurs in the denominator. Indeed, if $\alpha_*$ does not occur, the pairing is unchanged, while if it occurs in the numerator, lowering $q_{\alpha_*}$ can only decrease inner product. By \cite[Example~1.4]{HHSW}, all local generators are of the form $g_\beta^{ij \mid k}$. Therefore, the only cases that need to verify are when $\beta = (1,0,1,0)$, and $\{i,k\} = \{1,3\}$ or $\{j,k\} = \{1,3\}$. Without loss of generality let $i = 1, k = 3, j = 2$, it suffices to show that 
        \[q_{2110}+q_{1030} \leq q_{2020} + q_{1120},\]
        which is true by direct calculation.
        
Therefore, for all bounded ratio in $\Cone(\cG_4(\Delta_4^4))$, the corresponding exponent vector $g$ must satisfies that $\langle g, q_4\rangle \leq 0$. Since $\langle w_4, q_4\rangle > 0$, $w_4 \notin \Cone(\cG_4(\Delta_4^4))$.
\end{proof}

\subsection{The five-variable cubic obstruction}
\label{sec:five-cubic}

We next treat the other minimal failure pair, $(n,d)=(5,3)$.
For $\beta\in\Delta_5^1$ and pairwise distinct $i,j,k\in[5]$, similarly set
\[
    T_\beta^{ij\mid k}(f)
    :=
    \frac{
        c_{\beta+e_i+e_j}c_{\beta+2e_k}
    }{
        c_{\beta+e_i+e_k}c_{\beta+e_j+e_k}
    }
\]
and $g_\beta^{ij \mid k}$ as the corresponding logarithmic vector in the local cone. Define 
\[\begin{aligned}g_{\beta}^{ijk\mid st}:={}&\varepsilon_{\beta+e_i+e_j}+\varepsilon_{\beta+e_i+e_k}+\varepsilon_{\beta+e_j+e_k}+\varepsilon_{\beta+2e_s}+\varepsilon_{\beta+e_s+e_t}+\varepsilon_{\beta+2e_t}\\&-\varepsilon_{\beta+e_i+e_s}-\varepsilon_{\beta+e_j+e_s}-\varepsilon_{\beta+e_k+e_s}-\varepsilon_{\beta+e_i+e_t}-\varepsilon_{\beta+e_j+e_t}-\varepsilon_{\beta+e_k+e_t}\end{aligned}\]
as the logarithmic vector of the pentagonal generators described in \cite[Theorem~A]{HHSW}. The second inequality in
Lemma~\ref{lem:app-quadratic-inequalities} gives
\[
    T_\beta^{ij\mid k}(f)\leq 2.
\]

Define
\begin{equation}
\label{eq:five-variable-ratio}
    R_5(f):=
    \frac{
        c_{30000}c_{21000}c_{10101}c_{00210}c_{00021}
    }{
        c_{20100}c_{20010}c_{20001}c_{01110}c_{00111}
    },
\end{equation}
and let $w_5$ denote its exponent vector.

Put
\[
    \mu(s):=\min\{s,3-s\}
\]
and define
\[
    q_5^0(\alpha)
    :=
    \sum_{r=3}^5\mu(\alpha_2+\alpha_r).
\]
In the notation of Lemma~\ref{lem:cut-directions},
\[
    q_5^0
    =
    q^{\{2,3\}}+q^{\{2,4\}}+q^{\{2,5\}}.
\]
Finally, set $\alpha_* := (0,0,1,1,1)$ and
\begin{equation}
\label{eq:five-variable-modified-q}
    q_5(\alpha)
    :=
    \begin{cases}
        q_5^0(\alpha_*)-1, & \alpha=\alpha_*,\\
        q_5^0(\alpha), & \alpha\neq\alpha_*.
    \end{cases}
\end{equation}

\begin{theorem}[Five-variable cubic obstruction]
\label{thm:five-variable-cubic}
For every $f\in\cL_{5,3}^+$,
\[
    R_5(f)\leq32.
\]
Moreover,
\[
    w_5
    \in
    \BR(\cL_{5,3}^+)
    \setminus
    \Cone\bigl(\cG_5(\Delta_5^3)\bigr).
\]
In particular,
\[
    \Cone\bigl(\cG_5(\Delta_5^3)\bigr)
    \subsetneq
    \BR(\cL_{5,3}^+).
\]
\end{theorem}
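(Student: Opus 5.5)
The plan has two parts, mirroring the proof of Theorem~\ref{thm:four-variable-quartic}: an upper bound $R_5(f)\le 32$ from quadratic inequalities, and a separating functional built from $q_5$.

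\textbf{Boundedness.} I would write $R_5$ as a telescoping product of five triangular ratios $T_\beta^{ij\mid k}$ with $\beta\in\Delta_5^1$. Each such ratio is at most $2$ by Lemma~\ref{lem:app-quadratic-inequalities}, which gives $R_5(f)\le 2^5=32$.

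The first factor is forced. In the slice $\beta=e_1$,
\[
T_{e_1}^{35\mid 1}=\frac{c_{10101}\,c_{30000}}{c_{20100}\,c_{20001}}
\]
absorbs two numerator and two denominator coordinates of \eqref{eq:five-variable-ratio}. What remains is
\[
\frac{c_{21000}\,c_{00210}\,c_{00021}}{c_{20010}\,c_{01110}\,c_{00111}},
\]
which I would split into four more triangular ratios. These are taken in the slices $e_1,e_2,e_3,e_4$, and each intermediate coordinate introduced by one factor must cancel against another. The search is finite: at each step the triangular factor is determined by the slice and two indices. The constraint to enforce is that every intermediate coordinate appears exactly once upstairs and once downstairs.

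If no pure triangular factorization closes up, I would fall back on the device used for $(4,4)$. That means two alternative factorizations differing by a four-point quantity $B/A$ versus $B/C$, with the minimum bounded via the triangle inequality. This would only work if the constants still multiply to $32$, for instance four triangles times a factor at most $2$.

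\textbf{Nonlocality.} By Lemma~\ref{lem:cut-directions}, $q_5^0=q^{\{2,3\}}+q^{\{2,4\}}+q^{\{2,5\}}$ lies in $\BR(\cL_{5,3}^+)^\circ$, and hence by \eqref{eq:local-contained} in $\Cone(\cG_5(\Delta_5^3))^\circ$. Lowering the value at $\alpha_*=(0,0,1,1,1)$ leaves every pairing $\langle g,q_5\rangle$ unchanged unless $\alpha_*$ occurs in $g$. It can only decrease the pairing when $\alpha_*$ carries a positive coefficient.

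By \cite[Theorem~A]{HHSW}, the local generators in five variables are the triangular $g_\beta^{ij\mid k}$ and the pentagonal $g_\beta^{ijk\mid st}$. So it suffices to check those generators in which $\alpha_*$ appears with negative sign. Here $\beta\in\{e_3,e_4,e_5\}$, and by the symmetry of $q_5^0$ in the indices $3,4,5$ one may take $\beta=e_3$. For each such triangular or pentagonal generator, the check is a short evaluation of $\mu$ showing that $q_5^0$ has at least one unit of slack on it. This slack is what absorbs the perturbation.

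Finally, I would compute $\langle w_5,q_5\rangle$. On the numerator coordinates of \eqref{eq:five-variable-ratio}, $q_5^0$ takes the values $0,3,1,2,2$. On the denominator coordinates it takes $1,1,1,3,3$. Since $\alpha_*$ sits in the denominator, lowering $q(\alpha_*)$ by $1$ raises the pairing by $1$.

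My tally gives $\langle w_5,q_5^0\rangle=8-9=-1$, so the perturbation only brings the pairing to $0$, not above it. This must be reconciled before separation can be claimed. Either the perturbation must be taken larger, which requires more slack in the local check above, or the $\mu$-values must be rechecked. Once $\langle w_5,q_5\rangle>0$ is established, separation gives $w_5\notin\Cone(\cG_5(\Delta_5^3))$.

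\textbf{Main obstacle.} I expect the hardest step to be the joint calibration of the perturbation size against the polarity check on the pentagonal generators through $\alpha_*$. These generators involve six positive coordinates, so the available slack is the delicate quantity. I would first tabulate $q_5^0$ on all of $e_3+\Delta_5^2$, and only then fix the perturbation size.
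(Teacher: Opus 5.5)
Your boundedness plan has a conceptual gap. No telescoping product of triangular ratios $T_\beta^{ij\mid k}$ can equal $R_5$. Each such ratio has exponent vector $g_\beta^{ij\mid k}\in\cG_5(\Delta_5^3)$. A factorization $R_5=\prod T$ would therefore write $w_5$ as a nonnegative combination of local generators, giving $w_5\in\Cone(\cG_5(\Delta_5^3))$. That is exactly what the second half of the theorem denies, so your finite search is guaranteed to come up empty.

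The bound must come from an inequality that is not conic. The paper uses the device you list only as a fallback, but in a different shape from the one you guess. In the slice $e_1$, put $A=c_{21000}c_{10110}$, $B=c_{20100}c_{11010}$ and $C=c_{20010}c_{11100}$. The four-point inequality gives $\sqrt A\le\sqrt B+\sqrt C$, which controls only the minimum: $\min\{A/B,A/C\}\le 4$. The two factorizations are
\[
R_5=T_{e_1}^{45\mid1}\,T_{e_4}^{12\mid3}\,T_{e_5}^{13\mid4}\,\frac AB
=T_{e_1}^{35\mid1}\,T_{e_3}^{12\mid4}\,T_{e_4}^{45\mid3}\,\frac AC .
\]
Choosing the better one gives $2^3\cdot4=32$. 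So your first factor $T_{e_1}^{35\mid1}$ is not forced, since the other factorization starts with $T_{e_1}^{45\mid1}$. After it, the remainder is two triangles times $A/C$, not four triangles, and not four triangles times a factor at most $2$.

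For nonlocality your setup is the paper's, but the pairing is miscomputed. In fact
\[
q_5^0(1,0,1,0,1)=\mu(1)+\mu(0)+\mu(1)=2,
\]
not $1$. The numerator values are therefore $0,3,2,2,2$, so $\langle w_5,q_5^0\rangle=9-9=0$ and $\langle w_5,q_5\rangle=1>0$, as needed.

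Your proposed alternative, enlarging the perturbation, would fail. Several generators through $\alpha_*$ have exactly one unit of slack against $q_5^0$:
\[
\langle g_{e_3}^{125\mid34},q_5^0\rangle=12-13=-1,
\qquad
\langle g_{e_3}^{43\mid5},q_5^0\rangle=2+2-3-2=-1.
\]
Any decrease larger than $1$ at $\alpha_*$ therefore destroys polarity.

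You also assert this slack without checking it. The check needs an explicit enumeration. Fix $\beta=e_3$ and use the $i\leftrightarrow j$ swap and the $4\leftrightarrow5$ symmetry. The triangular generators to check are $g_{e_3}^{4j\mid5}$ for $j\in\{1,2,3\}$. The pentagonal generators are those with $4$ and $5$ on opposite sides, namely $g_{e_3}^{235\mid14}$, $g_{e_3}^{135\mid24}$ and $g_{e_3}^{125\mid34}$. After the perturbation these pair with $q_5$ to $0,0,0$ and $-1,-2,0$ respectively.
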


\begin{proof}
Similar to the proof of Theorem~\ref{thm:four-variable-quartic}, set 
\[
    A:=c_{21000}c_{10110},
    \qquad
    B:=c_{20100}c_{11010},
    \qquad
    C:=c_{20010}c_{11100}.
\]
Sirect cancellation gives the two
factorizations
\begin{align}
    R_5
    &=
    T_1^{45\mid 1}
    T_4^{12\mid 3}
    T_5^{13\mid 4}
    \frac AB,
    \label{eq:five-cubic-factorization-one}\\
    R_5
    &=
    T_1^{35\mid 1}
    T_3^{12\mid 4}
    T_4^{45\mid 3}
    \frac AC,
    \label{eq:five-cubic-factorization-two}
\end{align}
and we can obtain
\[
    R_5(f)\leq 2^3\cdot 4=32.
\]
Thus
\[
    w_5\in\BR(\cL_{5,3}^+).
\]

It remains to prove that the bounded ratio is not generated by the
quadratic slices. Similarly, we can calculate that the $q_5^0$-values on the five numerator coordinates of
\eqref{eq:five-variable-ratio} are
\[
    0,3,2,2,2,
\]
whereas those on the five denominator coordinates are
\[
    1,1,1,3,3.
\]
Both lists sum to $9$, so
\[
    \langle w_5,q_5^0\rangle=0.
\]
Since $c_{00111}$ occurs in the denominator of $R_5$ and its $q$-coordinate
was decreased by $1$,
\[
    \langle w_5,q_5\rangle=1>0.
\]

We need to show that $q_5 \in \Cone(\cG_5(\Delta_5^3))^\circ$. By Lemma~\ref{lem:cut-directions}, $q_5^0=q^{\{2,3\}}+q^{\{2,4\}}+q^{\{2,5\}}$ belongs to $\Cone(\cG_5(\Delta_5^3))^\circ$. We only need to analyze the local generators in which $\alpha_*$ occurs in the denominator. Indeed, if $\alpha_*$ does not occur, the pairing is unchanged, while if it occurs in the numerator, lowering $q_{\alpha_*}$ can only decrease inner product. Moreover, the modification at
$\alpha_*$ can only affect the slices $\beta=e_3,e_4,e_5$. By symmetry, it suffices to consider $\beta=e_3$. 

By \cite[Theorem~A]{HHSW}, the bounded ratio cone of a full-support Lorentzian matrix is generated by the triangular and pentagonal ratios. Therefore, the only nontrivial cases that need to verify are:
\begin{itemize}
    \item For triangular generators: $\{i,k\}=\{4,5\}$ or $\{j,k\}=\{4,5\}$. Without loss of generality let $i=4,k=5,j \in \{1,2,3\}$. It suffices to check
    \[q_{10110}+q_{00102}\le q_{00111}+q_{10101},\]
    \[q_{01110}+q_{00102}\le q_{00111}+q_{01101},\]
    \[q_{00210}+q_{00102}\le q_{00111}+q_{00201},\]
    which are true by direct calculation.
    \item For pentagonal generators $g_\beta^{ijk \mid st}$, $4$ and $5$ must lie on opposite sides of the partition $ijk \mid st$. Without loss of generality there are only three cases:
    \begin{itemize}
        \item For $g_{00100}^{235 \mid 14}$, it suffices to check 
        \[
        \begin{aligned}
            & q_{01200}+q_{01101}+q_{00201}   +q_{20100}+q_{10110}+q_{00120}\\
        \le & q_{11100}+q_{10200}+q_{10101}+q_{01110}+q_{00210}+q_{00111},
        \end{aligned}
        \]
        which reads $12\le13$.
        \item For $g_{e_3}^{135\mid24}$, it suffices to check 
        \[\begin{aligned}
            &q_{10200}+q_{10101}+q_{00201}   +q_{02100}+q_{01110}+q_{00120}\\   \le &q_{11100}+q_{01200}+q_{01101}+q_{10110}+q_{00210}+q_{00111},
            \end{aligned}\]
            which reads $12\le14$.
        \item For $g_{e_3}^{125\mid34}$, it suffices to check
        \[\begin{aligned}&q_{11100}+q_{10101}+q_{01101} +q_{00300}+q_{00210}+q_{00120}\\
        \le  &q_{10200}+q_{01200}+q_{00201}    +q_{10110}+q_{01110}+q_{00111},
        \end{aligned}\] 
        which reads $12\le12$.
    \end{itemize}
\end{itemize}

Therefore, for all bounded ratio in $\Cone(\cG_5(\Delta_5^3))$, the corresponding exponent vector $g$ must satisfies that $\langle g, q_5\rangle \leq 0$. Since $\langle w_5, q_5\rangle > 0$, $w_5 \notin \Cone(\cG_5(\Delta_5^3))$.
\end{proof}

\section{Propagation and completion of the classification}
\label{sec:propagation}

We now propagate the two minimal obstructions. First we keep four variables
and extend the quartic obstruction through every higher degree. We then fix
the degree and use variable aggregation to pass from each minimal number of
variables to every larger one. The resulting two failure regions, together
with the positive results of Sections~\ref{sec:preliminaries} and
\ref{sec:quaternary-cubics}, complete the classification.

\subsection{Degree propagation in four variables}
\label{sec:degree-obstructions}

The quartic bounded ratio extends to higher degree by differentiation. To
preserve the separation from the quadratic local cone, we construct a
degree-uniform family of perturbed square-transportation directions.

Fix $d\geq 4$ and write
\[
    m:=d-4,
    \qquad
    a:=\left\lfloor\frac m2\right\rfloor,
    \qquad
    b:=\left\lceil\frac m2\right\rceil,
    \qquad
    \gamma:=(a,0,b,0).
\]
Notice that $m$ and $d$ have the same parity. Define
\begin{equation}
\label{eq:degree-ratio}
    R_d(f)
    :=
    \frac{
        c_{\gamma+(1,0,3,0)}
        c_{\gamma+(3,0,1,0)}
        c_{\gamma+(0,2,0,2)}
    }{
        c_{\gamma+(2,0,2,0)}
        c_{\gamma+(1,0,1,2)}
        c_{\gamma+(1,2,1,0)}
    }.
\end{equation}
In factorial normalization, differentiation by $\partial^\gamma$ preserves
the displayed coefficients. Hence
\begin{equation}
\label{eq:degree-ratio-derivative}
    R_d(f)=R_4(\partial^\gamma f).
\end{equation}
Indeed, let
\[
f(x)=\sum_{\alpha\in\Delta_4^d}c_\alpha\frac{x^\alpha}{\alpha!},
\qquad
\gamma=(a,0,b,0),\qquad |\gamma|=d-4.
\]
Then
\[
\partial^\gamma f
=
\sum_{\beta\in\Delta_4^4}
c_{\beta+\gamma}\frac{x^\beta}{\beta!},
\]
so the normalized coefficient of \(x^\beta/\beta!\) in
\(\partial^\gamma f\) is \(c_{\beta+\gamma}\).

If \(w_d\) is obtained from \(w_4\) by shifting every index by
\(\gamma\), then
\[
\begin{aligned}
R_{w_d}(f)
&=
\prod_{\beta\in\Delta_4^4}
c_{\beta+\gamma}^{\,w_{4,\beta}}\\
&=
R_{w_4}(\partial^\gamma f).
\end{aligned}
\]
Since Lorentzian polynomials are closed under differentiation,
\[
f\in\mathcal L^+_{4,d}
\quad\Longrightarrow\quad
\partial^\gamma f\in\mathcal L^+_{4,4}.
\]
Hence, using the quartic bound,
\[
R_{w_d}(f)
=
R_{w_4}(\partial^\gamma f)
\le 64.
\]
Put
\[
    \alpha^*:=\gamma+(2,0,2,0),
    \qquad
    \varepsilon_d
    :=
    \begin{cases}
        2, & d\text{ even},\\
        1, & d\text{ odd}.
    \end{cases}
\]

Place $1,2,3,4$ at the vertices of a square in cyclic order and let
\[
    A=\{2,3\}
    \qquad
    B=\{3,4\}.
\]
For a cut $P=A\mid A^c$, write $q^P=q^A$, which is independent of the chosen side. Let $\mu_d(s)=min\{s,d-s\}$, define
\begin{equation}
\label{eq:degree-square-cost}
    q_d^0(\alpha)
    :=
    \mu_d(\alpha_2+\alpha_3)
    +\mu_d(\alpha_3+\alpha_4)
    =q^{A}_\alpha+q^{B}_\alpha.
\end{equation}
Finally, set
\begin{equation}
\label{eq:degree-modified-q}
    q_d(\alpha)
    :=
        q_d^0(\alpha)-\varepsilon_d\mathbf{1}_{\{\alpha=\alpha^*\}}.
\end{equation}

\begin{theorem}[Degree propagation in four variables]
\label{thm:degree-obstruction-four}
For every $d\geq 4$,
\[
    w_d
    \in
    \BR(\cL_{4,d}^+)
    \setminus
    \Cone\bigl(\cG_4(\Delta_4^d)\bigr).
\]
More precisely,
\[
    R_d(f)\leq 64
    \qquad\text{for every }f\in\cL_{4,d}^+,
\]
and
\[
    \left\langle w_d,q_d\right\rangle=\varepsilon_d>0,
    \qquad
    q_d\in\Cone\bigl(\cG_4(\Delta_4^d)\bigr)^\circ.
\]
\end{theorem}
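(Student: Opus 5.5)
The bound $R_d(f)\le 64$ is already settled by the differentiation argument preceding the statement: $R_d(f)=R_4(\partial^\gamma f)$ with $\partial^\gamma f\in\cL_{4,4}^+$, so Theorem~\ref{thm:four-variable-quartic} gives $R_d(f)\le 64$. In particular $w_d\in\BR(\cL_{4,d}^+)$. It remains to prove two things: the pairing identity $\langle w_d,q_d\rangle=\varepsilon_d$, and the polar membership $q_d\in\Cone(\cG_4(\Delta_4^d))^\circ$. Together these give $w_d\notin\Cone(\cG_4(\Delta_4^d))$, exactly as at the end of the proof of Theorem~\ref{thm:four-variable-quartic}.

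For the pairing, I will tabulate $(\alpha_2+\alpha_3,\alpha_3+\alpha_4)$ on the six coordinates of \eqref{eq:degree-ratio}. Throughout $d=a+b+4$. In the numerator:
\begin{itemize}
\item[] $\gamma+(1,0,3,0)\mapsto(b+3,b+3)$,
\item[] $\gamma+(3,0,1,0)\mapsto(b+1,b+1)$,
\item[] $\gamma+(0,2,0,2)\mapsto(b+2,b+2)$.
\end{itemize}
In the denominator:
\begin{itemize}
\item[] $\gamma+(2,0,2,0)\mapsto(b+2,b+2)$,
\item[] $\gamma+(1,0,1,2)\mapsto(b+1,b+3)$,
\item[] $\gamma+(1,2,1,0)\mapsto(b+3,b+1)$.
\end{itemize}
Summing $\mu_d$ over both coordinates, each side gives the same total $2\mu_d(b+1)+2\mu_d(b+2)+2\mu_d(b+3)$, so $\langle w_d,q_d^0\rangle=0$. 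Since $\alpha^*$ appears in the denominator with exponent $-1$, lowering $q$ there by $\varepsilon_d$ yields $\langle w_d,q_d\rangle=\varepsilon_d$.

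For polarity I follow the quartic proof. By Lemma~\ref{lem:cut-directions} and \eqref{eq:local-contained}, $q_d^0=q^A+q^B$ is nonpositive on every local generator. By \cite[Example~1.4]{HHSW}, the local generators in four variables are the triangular vectors $g_\beta^{ij\mid k}$. Lowering $q$ at $\alpha^*$ matters only when $\alpha^*$ is a denominator point $\beta+e_i+e_k$ or $\beta+e_j+e_k$. Since $\alpha^*=(a+2,0,b+2,0)$ has support $\{1,3\}$ and the two indices are distinct, this forces $\beta=\gamma+(1,0,1,0)$, with $\{i,k\}=\{1,3\}$ (or $\{j,k\}=\{1,3\}$, which is symmetric) and the remaining index in $\{2,4\}$. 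That leaves four inequalities, one for each choice of $k\in\{1,3\}$ and $j\in\{2,4\}$:
\[
q_d(\beta+e_i+e_j)+q_d(\beta+2e_k)\le q_d(\alpha^*)+q_d(\beta+e_j+e_k).
\]
Equivalently, the slack of the unmodified $q_d^0$ in each of these inequalities must be at least $\varepsilon_d$.

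This slack verification is the main step, because it must hold uniformly in $d$ and in both parities. I will carry it out explicitly. For example, with $k=3$, $i=1$, $j=2$:
\begin{itemize}
\item[] $\beta+e_1+e_2\mapsto(b+2,b+1)$,
\item[] $\beta+2e_3\mapsto(b+3,b+3)$,
\item[] $\alpha^*\mapsto(b+2,b+2)$,
\item[] $\beta+e_2+e_3\mapsto(b+3,b+2)$.
\end{itemize}
The slack is therefore
\[
\bigl[\mu_d(b+2)-\mu_d(b+1)\bigr]+\bigl[\mu_d(b+2)-\mu_d(b+3)\bigr].
\]
When $d$ is even, $a=b$ and the peak of $\mu_d$ is at $b+2$, so the slack is $1+1=2$. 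When $d$ is odd, $b=a+1$ and $d/2=b+\tfrac12$ lies between $b$ and $b+1$, so $\mu_d(b+1)=\mu_d(b+2)=b+1$ and $\mu_d(b+3)=b$; the slack is $0+1=1$. In both cases it is at least $\varepsilon_d$.

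The other three cases ($j=4$, and $k=1$ with $j=2,4$) go the same way, using the monotonicity and tent shape of $\mu_d$ around $d/2$. The odd case, where $a\neq b$, is where I expect the tightest margins, so I will tabulate it separately. Once all four inequalities hold, $q_d$ is nonpositive on every local generator while $\langle w_d,q_d\rangle=\varepsilon_d>0$, and the theorem follows.
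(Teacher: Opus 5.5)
Your proposal is correct and follows essentially the paper's proof. It uses the same differentiation bound and the same cancellation giving $\langle w_d,q_d^0\rangle=0$. It makes the same reduction to the four triangular generators at $\beta=\gamma+(1,0,1,0)$. Each of these has $q_d^0$-slack exactly $2\mu_d(b+2)-\mu_d(b+1)-\mu_d(b+3)=\varepsilon_d$, so all four pairings with $q_d$ in fact vanish.

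There is one small slip in the odd case. There $d=a+b+4=2b+3$, so $d/2=b+\tfrac32$ lies between $b+1$ and $b+2$, not $b+\tfrac12$ between $b$ and $b+1$. The values $\mu_d(b+1)=\mu_d(b+2)=b+1$ and $\mu_d(b+3)=b$ that you actually use are nonetheless correct.
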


\begin{proof}

The case $d=4$ is Theorem \ref{thm:four-variable-quartic}. By (\ref{eq:degree-ratio-derivative}), the boundedness has been proved above.
For any $d >4$, let
\[
    a=\lfloor\frac{d-4}{2}\rfloor , 
    \qquad b= \lceil\frac{d-4}{2}\rceil, 
    \qquad \gamma =(a,0,b,0).
\]
Still define
\[
    \mu_d(s)=min\{s,d-s\},
\]
and
\[
    q_d^{0}(\alpha)=\mu_d(\alpha_2+\alpha_3)+
    \mu_d(\alpha_3+\alpha_4) 
    \qquad \alpha \in \R^4.
\]
We define by construction
\[
    \alpha^*=(a+2,0,b+2,0).
\]
\[
    q_d=q_d^0-\varepsilon_d\mathbf{e}_{\alpha^*},
    \qquad \varepsilon_d=\begin{cases}
        2, & d \ even, \\
        1, & d \ odd .
    \end{cases}
\]
Here $\mathbf{e}_{\alpha^*}$ is the normal unit vector.
For $\beta\in\Delta_4^{d-2}$ and pairwise distinct
$i,j,k\in[4]$, define
\[
    g_\beta^{ij\mid k}
    :=
    \varepsilon_{\beta+e_i+e_j}
    +\varepsilon_{\beta+2e_k}
    -\varepsilon_{\beta+e_i+e_k}
    -\varepsilon_{\beta+e_j+e_k}.
\]
By \cite{HHSW}, in full support case
\[
    \Cone(\cG_4(\Delta_4^d))=\Cone\{g_\beta^{ij|k}:\beta \in \Delta_4^{d-2},\ i,j,k\ pairwise\ distinct \}.
\]
It therefore suffices to prove that
\[
    \langle g_\beta^{ij\mid k},q_d\rangle\leq0
\]
for every $\beta\in\Delta_4^{d-2}$ and every
pairwise distinct $i,j,k\in[4]$.
Define
\[
    \alpha(A)=\sum_{i\in A}\alpha_i, \qquad q_\alpha^A=\mu_d(\alpha(A)) 
    \qquad(Only \ A=\{2,3\} \ or\ A=\{3,4\} \ in \ fact).
\]
Let
\[
    s=\beta(A)=\sum_{i \in A} \beta_i, 
    \qquad t_r=\mathbf{1}_{\{r\in A\}},
\]
then
\begin{align*}
    \left\langle g_\beta^{ij|k},q^A\right\rangle
    & =\mu_d(s+t_i+t_j)+
    \mu_d(s+2t_k)-
    \mu_d(s+t_i+t_k)-
    \mu_d(s+t_j+t_k) \\
    & = \begin{cases}
        0, & t_k=t_i \ or \ t_k=t_j, \\
        \mu_d(s)+\mu_d(s+2)-2\mu_d(s+1) \leq 0, & t_i=t_j\neq t_k.
    \end{cases}
\end{align*}
The inequality holds because $\mu_d$ has discrete concavity. In summary,
\[
    \left\langle g,q_d^0 \right\rangle \leq 0, \qquad for\ any\ triangular\ generator\ g.
\]
Now
\[
    \left \langle g,q_d \right \rangle=\left\langle g, q_d^0 \right\rangle - \varepsilon_d g_{\alpha^*},
\]
we want to show 
\[
    \left\langle g,q_d \right\rangle \leq 0,\qquad when \ g_{\alpha^*} < 0.
\]
Suppose that
\[
    \bigl(g_\beta^{ij\mid k}\bigr)_{\alpha^*}<0.
\]
Then
\[
    \alpha^*=\beta+e_i+e_k
    \qquad\text{or}\qquad
    \alpha^*=\beta+e_j+e_k.
\]
Since $\operatorname{supp}(\alpha^*)=\{1,3\}$ and
$i,j,k$ are pairwise distinct, it follows that
\[
    \{i,k\}=\{1,3\}
    \qquad\text{or}\qquad
    \{j,k\}=\{1,3\}.
\]
In either case,
\[
    \beta=\beta^*
    :=\alpha^*-e_1-e_3
    =(a+1,0,b+1,0).
\]
Thus, up to interchanging $i$ and $j$, the only
triangular generators having a negative coefficient
at $\alpha^*$ are
\[
    g_{\beta^*}^{12\mid3},\qquad
    g_{\beta^*}^{14\mid3},\qquad
    g_{\beta^*}^{23\mid1},\qquad
    g_{\beta^*}^{34\mid1}.
\]

Let
\[
A=\{2,3\},\qquad B=\{3,4\},
\]
and define
\[
q_d^0=q^A+q^B,
\qquad q^A_\alpha=\mu_d(\alpha(A)),
\qquad q^B_\alpha=\mu_d(\alpha(B)).
\]
Recall that
\[
a=\left\lfloor\frac{d-4}{2}\right\rfloor,
\qquad
b=\left\lceil\frac{d-4}{2}\right\rceil,
\]
and
\[
\beta^*=(a+1,0,b+1,0).
\]
Then
\[
\alpha^*
=
\beta^*+e_1+e_3
=
(a+2,0,b+2,0),
\]
and
\[
\beta^*(A)=\beta^*(B)=b+1.
\]

Define
\[
\tau_d
:=
\mu_d(b+1)+\mu_d(b+3)-2\mu_d(b+2).
\]

We now compute the pairing of \(q_d^0\) with the four triangular
generators for which \(\alpha^*\) occurs with negative coefficient.

First,
\[
g_{\beta^*}^{12\mid3}
=
\mathbf e_{\beta^*+e_1+e_2}
+\mathbf e_{\beta^*+2e_3}
-\mathbf e_{\beta^*+e_1+e_3}
-\mathbf e_{\beta^*+e_2+e_3}.
\]
For \(A=\{2,3\}\),
\[
\begin{aligned}
\left\langle g_{\beta^*}^{12\mid3},q^A\right\rangle
&=
\mu_d(b+2)+\mu_d(b+3)
-\mu_d(b+2)-\mu_d(b+3)\\
&=0.
\end{aligned}
\]
For \(B=\{3,4\}\),
\[
\begin{aligned}
\left\langle g_{\beta^*}^{12\mid3},q^B\right\rangle
&=
\mu_d(b+1)+\mu_d(b+3)
-2\mu_d(b+2)\\
&=\tau_d.
\end{aligned}
\]
Hence
\[
\left\langle g_{\beta^*}^{12\mid3},q_d^0\right\rangle
=\tau_d.
\]

Similarly,
\[
g_{\beta^*}^{14\mid3}
=
\mathbf e_{\beta^*+e_1+e_4}
+\mathbf e_{\beta^*+2e_3}
-\mathbf e_{\beta^*+e_1+e_3}
-\mathbf e_{\beta^*+e_4+e_3}.
\]
For \(A=\{2,3\}\),
\[
\begin{aligned}
\left\langle g_{\beta^*}^{14\mid3},q^A\right\rangle
&=
\mu_d(b+1)+\mu_d(b+3)
-2\mu_d(b+2)\\
&=\tau_d.
\end{aligned}
\]
For \(B=\{3,4\}\),
\[
\begin{aligned}
\left\langle g_{\beta^*}^{14\mid3},q^B\right\rangle
&=
\mu_d(b+2)+\mu_d(b+3)
-\mu_d(b+2)-\mu_d(b+3)\\
&=0.
\end{aligned}
\]
Thus
\[
\left\langle g_{\beta^*}^{14\mid3},q_d^0\right\rangle
=\tau_d.
\]

Similarly
\[
g_{\beta^*}^{23\mid1}
=
\mathbf e_{\beta^*+e_2+e_3}
+\mathbf e_{\beta^*+2e_1}
-\mathbf e_{\beta^*+e_2+e_1}
-\mathbf e_{\beta^*+e_3+e_1}.
\]
For \(A=\{2,3\}\),
\[
\left\langle g_{\beta^*}^{23\mid1},q^A\right\rangle=
\mu_d(b+3)+\mu_d(b+1)
-2\mu_d(b+2)
\]
For \(B=\{3,4\}\),
\[
\left\langle g_{\beta^*}^{23\mid1},q^B\right\rangle=0.
\]
Therefore
\[
\left\langle g_{\beta^*}^{23\mid1},q_d^0\right\rangle
=\tau_d.
\]

Finally,
\[
g_{\beta^*}^{34\mid1}
=
\mathbf e_{\beta^*+e_3+e_4}
+\mathbf e_{\beta^*+2e_1}
-\mathbf e_{\beta^*+e_3+e_1}
-\mathbf e_{\beta^*+e_4+e_1}.
\]
For \(A=\{2,3\}\),
\[
\left\langle g_{\beta^*}^{34\mid1},q^A\right\rangle=0,
\]
while for \(B=\{3,4\}\),
\[
\left\langle g_{\beta^*}^{34\mid1},q^B\right\rangle=\tau_d.
\]
Hence
\[
\left\langle g_{\beta^*}^{34\mid1},q_d^0\right\rangle
=\tau_d.
\]

Thus, for any
\[
g\in
\left\{
g_{\beta^*}^{12\mid3},
g_{\beta^*}^{14\mid3},
g_{\beta^*}^{23\mid1},
g_{\beta^*}^{34\mid1}
\right\},
\]
we have
\[
\left\langle g,q_d^0\right\rangle=\tau_d.
\]

Now we compute \(\tau_d\).

If \(d=2h\), then
\[
a=b=h-2,
\]
and hence
\[
b+1=h-1,\qquad
b+2=h,\qquad
b+3=h+1.
\]
Since
\[
\mu_d(h-1)=h-1,
\qquad
\mu_d(h)=h,
\qquad
\mu_d(h+1)=h-1,
\]
we obtain
\[
\begin{aligned}
\tau_d
&=
\mu_d(h-1)+\mu_d(h+1)-2\mu_d(h)\\
&=
(h-1)+(h-1)-2h\\
&=-2.
\end{aligned}
\]

Similarly, if \(d=2h+1\), then
\[
a=h-2,\qquad b=h-1,
\]
we obtain
\[
\begin{aligned}
\tau_d
&=
\mu_d(h)+\mu_d(h+2)-2\mu_d(h+1)\\
&=
h+(h-1)-2h\\
&=-1.
\end{aligned}
\]

Therefore, if
\[
\varepsilon_d=
\begin{cases}
2,& d\text{ is even},\\
1,& d\text{ is odd},
\end{cases}
\]
then
\[
\tau_d=-\varepsilon_d.
\]

For each of the four generators above, the coefficient of
\(\mathbf e_{\alpha^*}\) is \(-1\). Therefore
\[
\begin{aligned}
\left\langle g,q_d\right\rangle
&=
\left\langle g,q_d^0\right\rangle
-\varepsilon_d g_{\alpha^*}\\
&=
-\varepsilon_d-\varepsilon_d(-1)\\
&=0.
\end{aligned}
\]

For every other triangular generator, either coefficient at $\alpha^*$ does not occur,
in which case the pairing is unchanged, or $\alpha^*$ occurs with
positive coefficient, in which case replacing $q_d^0$ by $q_d$
can only decrease the pairing. Hence
\[
\left\langle g,q_d\right\rangle\le 0
\]
for every triangular generator $g$.

Since the four-variable quadratic bounded-ratio cone is generated by
triangular generators, it follows that
\[
q_d\in
\Cone\bigl(\cG_4(\Delta_4^d)\bigr)^\circ.
\]
Finally, we verify the positive pairing. Set
\[
    u:=\mu_d(b+1),\qquad
    v:=\mu_d(b+2),\qquad
    z:=\mu_d(b+3).
\]
In the order in which they appear in
\eqref{eq:degree-ratio}, the values of $q_d^0$ at the
three numerator indices are
\[
    2z,\qquad 2u,\qquad 2v,
\]
whereas its values at the three denominator indices are
\[
    2v,\qquad u+z,\qquad u+z.
\]
Consequently,
\[
    \langle w_d,q_d^0\rangle
    =(2z+2u+2v)-\bigl(2v+(u+z)+(u+z)\bigr)
    =0.
\]
Since
\[
    q_d=q_d^0-\varepsilon_d\varepsilon_{\alpha^*}
    \qquad\text{and}\qquad
    (w_d)_{\alpha^*}=-1,
\]
we obtain
\[
    \langle w_d,q_d\rangle
    =\langle w_d,q_d^0\rangle
     -\varepsilon_d(w_d)_{\alpha^*}
    =\varepsilon_d>0.
\]
\end{proof}

\subsection{Dimension propagation by variable aggregation}
\label{sec:dimension-propagation}

We now propagate a full-support obstruction to arbitrarily many variables.

\begin{lemma}[Variable aggregation]
\label{lem:variable-aggregation}
Let $m\geq 2$ and $d\geq 2$. Suppose that
$w\in\BR(\cL_{m,d}^+)$ and
$q^{(0)}\in\R^{\Delta_m^d}$ satisfy
\[
    q^{(0)} \in \Cone(\cG_m(\Delta_m^d))^\circ,
    \qquad
    \langle w, q^{(0)}\rangle >0
\]
Then, for every $n\geq m$, there exist
\[
    \widetilde w\in\BR(\cL_{n,d}^+)
    \qquad\text{and}\qquad
    q\in\Cone\bigl(\cG_n(\Delta_n^d)\bigr)^\circ
\]
such that
\[
    \langle\widetilde w,q\rangle>0.
\]
In particular,
\[
    \widetilde w
    \notin
    \Cone\bigl(\cG_n(\Delta_n^d)\bigr).
\]
\end{lemma}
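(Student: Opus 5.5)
We will prove the lemma by embedding $w$ on a coordinate face and pulling $q^{(0)}$ back along a variable-merging map. Fix $n\geq m$ and the surjection $\pi:[n]\to[m]$ given by $\pi(i)=i$ for $i\leq m$ and $\pi(i)=m$ for $i>m$. For $\alpha\in\N^n$, write $\pi_*\alpha\in\N^m$ for the pushforward $(\pi_*\alpha)_r=\sum_{\pi(i)=r}\alpha_i$; it preserves degree. Identify $\Delta_m^d$ with the face $\{\alpha\in\Delta_n^d:\alpha_{m+1}=\cdots=\alpha_n=0\}$, on which $\pi_*$ is the identity.

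We then set
\[
    \widetilde w_\alpha:=\begin{cases} w_\alpha,&\alpha\text{ in the face},\\ 0,&\text{otherwise},\end{cases}
    \qquad
    q(\alpha):=q^{(0)}(\pi_*\alpha).
\]
Boundedness of $\widetilde w$ comes from specialization. For $f\in\cL_{n,d}^+$, let $g:=f|_{x_{m+1}=\cdots=x_n=0}$. Setting variables to zero preserves Lorentzianity, and in factorial normalization the coefficients of $g$ are exactly the $c_\alpha$ with $\alpha$ in the face, all positive. Hence $g\in\cL_{m,d}^+$ and $R_{\widetilde w}(f)=R_w(g)$, which is uniformly bounded. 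The positive pairing is immediate, since $\pi_*$ is the identity on the support of $\widetilde w$:
\[
    \langle\widetilde w,q\rangle=\langle w,q^{(0)}\rangle>0.
\]

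The main step is to show $q\in\Cone(\cG_n(\Delta_n^d))^\circ$, that is, $\langle\widetilde v^{\,\beta},q\rangle\leq0$ for every $\beta\in\Delta_n^{d-2}$ and every $v\in\BR(\cL_{n,2}^+)$. All slices are full because the support is full. Define the pushforward $v'\in\R^{\Delta_m^2}$ by
\[
    v'_\delta:=\sum_{\gamma\in\Delta_n^2:\ \pi_*\gamma=\delta}v_\gamma .
\]
Then
\[
    \langle\widetilde v^{\,\beta},q\rangle
    =\sum_\gamma v_\gamma\,q^{(0)}(\pi_*\beta+\pi_*\gamma)
    =\bigl\langle\widetilde{v'}^{\,\pi_*\beta},q^{(0)}\bigr\rangle .
\]
This is $\leq0$ by the hypothesis on $q^{(0)}$, provided $v'\in\BR(\cL_{m,2}^+)$. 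To prove that, take a full-support Lorentzian quadratic $f'$ on $m$ variables with normalized Hessian $P'=(p'_{rs})$. Put $f(\mathbf x):=f'(L\mathbf x)$, where $L$ is the nonnegative linear map $(L\mathbf x)_r=\sum_{\pi(i)=r}x_i$. Lorentzianity is preserved under nonnegative linear changes of variables (Br\"and\'en--Huh), and the normalized Hessian of $f$ is $p_{ij}=p'_{\pi(i)\pi(j)}$. Thus $f\in\cL_{n,2}^+$, and the coefficient at $\gamma=e_i+e_j$ equals the coefficient of $f'$ at $\pi_*\gamma$. Consequently $R_v(f)=R_{v'}(f')$, so $\sup R_{v'}\leq\sup R_v<\infty$.

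I expect this last verification to be the only real obstacle. One must check that composing with $L$ stays in the class $\cL^+_{n,2}$ with full positive support, which holds because $L$ is surjective onto each coordinate. One must also check that the factorial normalization matches the Hessian entries exactly. Once that bookkeeping is done, the final assertion $\widetilde w\notin\Cone(\cG_n(\Delta_n^d))$ follows, because every element of that cone pairs nonpositively with $q$ while $\langle\widetilde w,q\rangle>0$.
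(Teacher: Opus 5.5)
Your proposal is correct and follows essentially the same route as the paper. You extend $w$ by zero and bound it by restricting to the coordinate face. You pull $q^{(0)}$ back along the aggregation map. You check polarity by pushing each local generator $v$ forward and showing that the pushforward is a bounded ratio via the substitution $f'(L\mathbf x)$, which is Lorentzian because Lorentzianity is preserved under nonnegative linear changes of variables.

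The differences are only cosmetic. You fix one specific surjection $\pi$, whereas the paper allows any surjection fixing $[m]$. You also cite closure under setting variables to zero directly, whereas the paper justifies that step through principal submatrices of the Hessian slices.
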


\begin{proof}
Choose a surjection $\pi:[n]\to[m]$ such that $\pi(i)=i$ for
$1\leq i\leq m$, and define the aggregation map
\[
    A_\pi:\Delta_n^d\longrightarrow\Delta_m^d,
    \qquad
    (A_\pi\alpha)_r
    :=
    \sum_{\pi(i)=r}\alpha_i.
\]
Extend $w$ by zero to the coordinates of $\Delta_n^d$ that are not
supported on the first $m$ variables; denote the resulting vector by
$\widetilde w$. If $f\in\cL_{n,d}^+$, then
\[
    g(x_1,\ldots,x_m)
    :=
    f(x_1,\ldots,x_m,0,\ldots,0)
\]
has full support $\Delta_m^d$. Moreover, every quadratic Hessian slice of
$g$ is a principal submatrix of the corresponding slice of $f$. Hence
$g\in\cL_{m,d}^+$, and
\[
    R_{\widetilde w}(f)=R_w(g).
\]
It follows that
\[
    \widetilde w\in\BR(\cL_{n,d}^+).
\]

Now define
\[
    q_\alpha:=q^{(0)}_{A_\pi\alpha}
    \qquad
    (\alpha\in\Delta_n^d).
\]
We need to prove that $q \in \Cone(\cG_n(\Delta_n^d))^\circ$. Fix any generator of the local cone $(\tilde{v}_\beta)_\beta$ corresponding to $v \in \BR(\cL_{n,2}^+)$, define 
\[
\bar{v}_\eta := \sum_{\gamma \in \Delta_n^2, A_\pi \gamma=\eta}v_\gamma,
\]
we show that $\bar{v}$ is bounded. Indeed, let
\[
h(y)=\sum_{\eta\in\Delta_m^2} c_\eta \frac{y^\eta}{\eta!} \in \mathcal L^+_{m,2}.
\]
Define
\[
\widehat h(x):=h\!\left(\sum_{\pi(i)=1}x_i,\ldots,\sum_{\pi(i)=m}x_i\right).
\]
Since Lorentzian polynomials are preserved under nonnegative linear transformations of variables \cite[Theorem~2.10]{BH}, we have $\widehat{h} \in \cL_{n,2}^+$. In factorial normalization, $\widehat{c}_\gamma = c_{A_\pi \gamma}$.
Since $v\in\operatorname{BR}(\mathcal L^+_{n,2})$, $R_v(\widehat h)$ is uniformly bounded. On the other hand, 
\[
\begin{aligned}
R_v(\widehat h)
&=\prod_{\gamma\in\Delta_n^2}
c_{A_\pi\gamma}^{\,v_\gamma}\\
&=\prod_{\eta\in\Delta_m^2}c_\eta^{\sum_{\gamma:\,A_\pi\gamma=\eta}v_\gamma}\\
&=\prod_{\eta\in\Delta_m^2}c_\eta^{\,\bar v_\eta}\\
&=R_{\bar v}(h).
\end{aligned}
\]
Thus $R_{\bar v}(h)$ is uniformly bounded on $\cL^+_{m,2}$, and hence $\bar v\in \operatorname{BR}(\mathcal L^+_{m,2})$.

Therefore, $\langle \tilde{v}_\beta,q \rangle = \langle \tilde{\bar{v}}_{A_\pi \beta},q^{(0)}\rangle \leq 0$ since $q^{(0)} \in \Cone(\cG_m(\Delta_m^d))^\circ$. This gives $q \in \Cone(\cG_n(\Delta_n^d))^\circ$

Finally, since $\pi$ restricts to the identity on $[m]$,
\[
    \langle\widetilde w,q\rangle
    =
    \langle w,q^{(0)}\rangle
    >0.
\]
This proves the result.
\end{proof}

\begin{corollary}[Sharp dimension boundary for cubics]
\label{cor:cubic-dimension-boundary}
For every $n\geq 5$,
\[
    \Cone\bigl(\cG_n(\Delta_n^3)\bigr)
    \subsetneq
    \BR(\cL_{n,3}^+).
\]
Thus, for cubics, the quadratic local-to-global principle holds for every
$M$-convex support if and only if $n\leq 4$.
\end{corollary}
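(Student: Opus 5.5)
The plan is to combine Theorem~\ref{thm:five-variable-cubic} with Lemma~\ref{lem:variable-aggregation} for the strict inclusion when $n\ge5$. The ``if and only if'' statement will then follow from the positive results already established.

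First, take $m=5$ and $d=3$ in Lemma~\ref{lem:variable-aggregation}. For the input data, use $w=w_5$ and $q^{(0)}=q_5$ from Theorem~\ref{thm:five-variable-cubic}. The proof of that theorem establishes exactly the three hypotheses we need:
\begin{enumerate}[label=\textup{(\alph*)}]
\item $w_5\in\BR(\cL_{5,3}^+)$, which is the bound $R_5\le 32$;
\item $q_5\in\Cone(\cG_5(\Delta_5^3))^\circ$, which is the verification on the triangular and pentagonal generators;
\item $\langle w_5,q_5\rangle=1>0$.
\end{enumerate}
For every $n\ge5$, the lemma then produces $\widetilde w\in\BR(\cL_{n,3}^+)$ and $q\in\Cone(\cG_n(\Delta_n^3))^\circ$ with $\langle\widetilde w,q\rangle>0$. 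Every element of $\Cone(\cG_n(\Delta_n^3))$ pairs nonpositively with $q$, so $\widetilde w$ lies outside the cone. Combined with the inclusion \eqref{eq:local-contained}, this gives the strict inclusion
\[
\Cone\bigl(\cG_n(\Delta_n^3)\bigr)\subsetneq\BR(\cL_{n,3}^+).
\]

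For the second sentence, the case $n\le3$ is Proposition~\ref{prop:easy-positive}(ii), and the case $n=4$ is Theorem~\ref{thm:quaternary-cubic-local-global}. Together these give the local-to-global equality for every $M$-convex support whenever $n\le4$. Conversely, for $n\ge5$ the full support $\Delta_n^3$ is itself $M$-convex and is a counterexample by the first part.

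There is no real obstacle, since everything reduces to earlier results. The only point to check is that Lemma~\ref{lem:variable-aggregation} is applied with hypotheses matching exactly what Theorem~\ref{thm:five-variable-cubic} supplies. In particular, the lemma needs the separating vector $q^{(0)}$ to lie in the polar of the local cone, and membership in the polar of the global cone $\BR(\cL_{5,3}^+)^\circ$ would not suffice; the proof of Theorem~\ref{thm:five-variable-cubic} verifies the local-cone polar condition, so the hypothesis holds.
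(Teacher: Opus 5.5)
Your proposal is correct and follows the paper's proof essentially verbatim. Like the paper, you apply Lemma~\ref{lem:variable-aggregation} with $m=5$, $d=3$ to the pair $(w_5,q_5)$ from Theorem~\ref{thm:five-variable-cubic}, and you take the positive cases $n\le 3$ and $n=4$ from Proposition~\ref{prop:easy-positive} and Theorem~\ref{thm:quaternary-cubic-local-global}; your explicit use of \eqref{eq:local-contained} for the non-strict inclusion and your remark that $q_5$ must lie in the polar of the local cone are accurate small additions.
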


\begin{proof}
The proof of Theorem~\ref{thm:five-variable-cubic} constructs a separator
$q_5$ and the ratio $w_5$ as required in the hypotheses of
Lemma~\ref{lem:variable-aggregation} with $m=5$ and $d=3$. Its
conclusion gives the strict inclusion for every $n\geq 5$. The positive
statement for $n\leq 3$ follows from
Proposition~\ref{prop:easy-positive}, and the case $n=4$ is
Theorem~\ref{thm:quaternary-cubic-local-global}.
\end{proof}

\begin{corollary}[Full-support failure for all $n\geq 4$, $d\geq 4$]
\label{cor:all-high-degree}
For every $n\geq 4$ and $d\geq 4$,
\[
    \Cone\bigl(\cG_n(\Delta_n^d)\bigr)
    \subsetneq
    \BR(\cL_{n,d}^+).
\]
\end{corollary}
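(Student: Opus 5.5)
The plan is to combine Theorem~\ref{thm:degree-obstruction-four} with Lemma~\ref{lem:variable-aggregation}, mirroring the proof of Corollary~\ref{cor:cubic-dimension-boundary}. Fix $d\geq 4$. Theorem~\ref{thm:degree-obstruction-four} provides the hypotheses of Lemma~\ref{lem:variable-aggregation} with $m=4$:
\[
    w_d\in\BR(\cL_{4,d}^+),
    \qquad
    q_d\in\Cone\bigl(\cG_4(\Delta_4^d)\bigr)^\circ,
    \qquad
    \langle w_d,q_d\rangle=\varepsilon_d>0 .
\]
The conditions $m\geq 2$ and $d\geq 2$ of the lemma clearly hold.

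Applying Lemma~\ref{lem:variable-aggregation} for any $n\geq 4$ gives a vector $\widetilde w\in\BR(\cL_{n,d}^+)$ and a functional $q\in\Cone(\cG_n(\Delta_n^d))^\circ$ with $\langle\widetilde w,q\rangle>0$. By the definition of the polar, every element of $\Cone(\cG_n(\Delta_n^d))$ pairs nonpositively with $q$. Hence $\widetilde w\notin\Cone(\cG_n(\Delta_n^d))$. Together with the inclusion \eqref{eq:local-contained}, this yields the strict inclusion
\[
    \Cone\bigl(\cG_n(\Delta_n^d)\bigr)\subsetneq\BR(\cL_{n,d}^+).
\]

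For $n=4$ the conclusion is already the statement of Theorem~\ref{thm:degree-obstruction-four}, so the lemma is only needed for $n>4$. I do not expect a genuine obstacle here, since all the substantive work sits in the two earlier results. The one point to check is that the separator from Theorem~\ref{thm:degree-obstruction-four} lies in the polar of the full local cone $\Cone(\cG_4(\Delta_4^d))$, not merely in the polar of some subcone. This is asserted there, via the HHSW description of the four-variable local cone as generated by triangular ratios, so the hypothesis of Lemma~\ref{lem:variable-aggregation} is met as stated.
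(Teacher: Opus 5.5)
Your proposal is correct and follows the paper's own argument: the paper proves this corollary in one line, by feeding the pair $(w_d,q_d)$ from Theorem~\ref{thm:degree-obstruction-four} into Lemma~\ref{lem:variable-aggregation} with $m=4$. Your extra remarks, that the case $n=4$ is the theorem itself and that \eqref{eq:local-contained} turns $\widetilde w\notin\Cone(\cG_n(\Delta_n^d))$ into a strict inclusion, just spell out what the paper leaves implicit.
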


\begin{proof}
It follows directly from Theorem~\ref{thm:degree-obstruction-four} and Lemma~\ref{lem:variable-aggregation}.
\end{proof}

\subsection{Proof of the complete classification}
\label{sec:classification-proof}

We now combine the positive results and the two families of obstructions to
prove the main theorem.

\begin{proof}[Proof of Theorem~\ref{thm:main-classification}]
Assume first that
\[
    n\leq 3,
    \qquad\text{or}\qquad
    d=2,
    \qquad\text{or}\qquad
    (n,d)=(4,3).
\]
Proposition~\ref{prop:easy-positive} proves the first two cases, while
Theorem~\ref{thm:quaternary-cubic-local-global} proves the exceptional
quaternary cubic case. Hence
\[
    \BR(\cL_S^+)
    =
    \Cone\bigl(\cG_n(S)\bigr)
\]
for every nonempty $M$-convex support $S\subseteq\Delta_n^d$.

Conversely, suppose that none of the three conditions holds. Since
$d\geq 2$, there are only two possibilities. If $d=3$, then necessarily
$n\geq 5$, and Corollary~\ref{cor:cubic-dimension-boundary} gives a strict
full-support counterexample. If $d\geq 4$, then necessarily $n\geq 4$, and
Corollary~\ref{cor:all-high-degree} gives a strict full-support
counterexample. In either case, $\Delta_n^d$ is $M$-convex, so the
full-support counterexample disproves the universal equality in \emph{(i)}.
Thus universal quadratic local-to-global generation fails already for
$S=\Delta_n^d$. This proves both the equivalence and the final assertion.
\end{proof}

The constants $32$ and $64$ are used only to establish boundedness; no claim
of optimality is made.

The classification may be summarized by the table
\[
    \begin{array}{c|ccc}
        & d=2 & d=3 & d\geq 4\\ \hline
        n\leq 3 & \checkmark & \checkmark & \checkmark\\
        n=4     & \checkmark & \checkmark & \times\\
        n\geq 5 & \checkmark & \times     & \times
    \end{array}
\]
where $\checkmark$ means equality for every $M$-convex support, and
$\times$ means failure already for full support.

\section*{Acknowledgments}

This project grew out of a reading group on Lorentzian polynomials organized
by Botong Wang. We thank him for introducing us to the subject and for his
guidance and encouragement. 

During the development and preparation of this work, the authors used ChatGPT 5.6 for exploring proof strategies, and assisting with the editing and revision. AI-generated outputs were treated as suggestions instead of authoritative results. All of the mathematical arguments were independently verified by the authors, who take full responsibility for the content of this paper.

\end{document}